\numberwithin{equation}{section}
\numberwithin{equation}{section}
\newtheorem{theorem} {Theorem} [section]
\newtheorem{lemma} [theorem]  {Lemma}
\newtheorem{corollary} [theorem] {Corollary}
\theoremstyle{definition}
\newtheorem{example} [theorem] {Example}
\title{{\Large \bf 
Polynomials with exponents in compact convex sets and 
associated weighted extremal functions - \\
Approximations and regularity}}
\author{Bergur Snorrason}
\date{}
\begin{document}

\maketitle

\begin{abstract}
\noindent
We study various regularization operators on plurisubharmonic functions that
    preserve Lelong classes with growth given by certain compact convex sets.
The purpose is to determine when the weighted Siciak-Zakharyuta functions associated
    with these Lelong classes are lower semicontinuous.
The operators are given by integral, infimal, and supremal convolutions.
Continuity properties of the logarithmic supporting function are studied
    and a precise description is given of when it is uniformly continuous.
This gives a contradiction to published results about the Hölder continuity of these Siciak-Zakharyuta functions.

\medskip\par
\noindent{\em Subject Classification (2020)}:
32U35. Secondary 32U15.
\end{abstract}
\section{Introduction}
\label{sec:1}
A useful tool in pluripotential theory is regularization by convolution.
If $u \in \mathcal{PSH}(\Omega)$, for some open $\Omega \subset \mathbb{C}^n$,
    then $u * \chi_\delta \in \mathcal{PSH} \cap \mathcal{C}^\infty(\Omega_\delta)$, for $\delta > 0$,
    where we define
\begin{equation*}
    u * \chi_\delta(z)
    =
    \int_{\mathbb{C}^n}
        u(z - w) \chi_\delta(w)\,d\lambda(w),
    \quad
    z \in \Omega_\delta.
\end{equation*}
Here, $\Omega_\delta = \{z \in \Omega \,;\, B(z, \delta) \subset \Omega\}$,
        where $B(z, \delta)$ is the open euclidean ball with center $z$ and radius $\delta$,
    $\chi_\delta$ is a standard smoothing kernel in $\mathbb{C}^n$ with support in the closed ball
    $\overline{B}(0, \delta)$, and
    $\mathcal{C}^\infty(\Omega_\delta)$ is the family of smooth function on $\Omega_\delta$.
We also have that $u * \chi_\delta \searrow u$, as $\delta \searrow 0$.
In the specific setting of $\Omega = \mathbb{C}^n$, we have $\Omega_\delta = \mathbb{C}^n$.
So, every function in $\mathcal{PSH}(\mathbb{C}^n)$ can approximated by a decreasing sequence in
    $\mathcal{PSH} \cap \mathcal{C}^\infty(\mathbb{C}^n)$.
For details on smoothing by convolution, see Klimek \cite[Thm.~2.9.2]{Kli:1991}.

In some cases, it is necessary to choose a method for regularizing plurisubharmonic functions
    that preserves a certain subclass of $\mathcal{PSH}(\mathbb{C}^n)$.
One such case is the Lelong class $\mathcal{L}(\mathbb{C}^n)$.
We say that $u \in \mathcal{PSH}(\mathbb{C}^n)$ belongs to the Lelong class $\mathcal{L}(\mathbb{C}^n)$
    if $u \leq \log^+\| \cdot \|_\infty + c_u$ for some constant $c_u$,
    where $\| \cdot \|_\infty$ is the supremum norm in $\mathbb{C}^n$ and $\log^+ = \max\{0, \log\}$.
A powerful tool in the study of pluripotential theory is the Siciak-Zakharyuta function of $E \subset \mathbb{C}^n$,
    defined by
\begin{equation*}
    V_E(z)
    =
    \sup\{u(z) \,;\, u \in \mathcal{L}(\mathbb{C}^n), u|_E \leq 0\},
    \quad
    z \in \mathbb{C}^n.
\end{equation*}
One can show that smoothing with a standard smoothing kernel preserves the Lelong class.
Namely, if $u \in \mathcal{L}(\mathbb{C}^n)$ then $u * \chi_\delta \in \mathcal{L}(\mathbb{C}^n)$.
This is used to show that $V_K$ is lower semicontinuous,
  for compact non-pluripolar $K \subset \mathbb{C}^n$.
See Klimek \cite[Section~5.1]{Kli:1991}.

In the recent study of pluripotential theory related to convex sets,
    the Lelong classes are generalized by describing the growth more freely.
We fix a compact convex set $0 \in S \subset \mathbb{R}^n_+$ and recall that the \emph{supporting function of $S$},
    given by $\varphi_S \colon \mathbb{R}^n \rightarrow \mathbb{R}$, $\varphi_S(\xi) = \sup_{x \in S} \langle x, \xi \rangle$,
    is positively homogeneous and convex,
    which is equivalent to $\varphi_S(t\xi) = t \varphi_S(\xi)$ and
    $\varphi_S(\xi + \eta) \leq \varphi_S(\xi) + \varphi_S(\eta)$,
    for $t \in \mathbb{R}_+$ and $\xi, \eta \in \mathbb{R}^n$.
We define the \emph{logarithmic supporting function of $S$} by
\begin{equation*}
    H_S(z)
    =
    \left \{
    \begin{array}{l l}
        \varphi_S(\log |z_1|, \dots, \log |z_n|), & z \in \mathbb{C}^{*n},\\
        \varlimsup\limits_{\mathbb{C}^{*n} \ni w \rightarrow z} H_S(w), & z \in \mathbb{C}^n \setminus \mathbb{C}^{*n},
    \end{array}
    \right.
\end{equation*}
where $\mathbb{C}^{*n} = (\mathbb{C}^*)^n$ and $\mathbb{C}^* = \mathbb{C} \setminus \{0\}$.
This function is continuous and plurisubharmonic on $\mathbb{C}^n$ and
    its behavior on the coordinate hyperplanes $\mathbb{C}^n \setminus \mathbb{C}^{*n}$
    can be described using logarithmic supporting functions of sets of lower dimensions.
See \cite[Props.~3.3 and 3.4]{MagSigSigSno:2023}.
We define the \emph{Lelong class} $\mathcal{L}^S(\mathbb{C}^n)$ \emph{given by $S$}
    as the set of all $u \in \mathcal{PSH}(\mathbb{C}^n)$ satisfying
    $u \leq H_S + c_u$, for some constant $c_u$.
Similarly, we define $\mathcal{L}^S_+(\mathbb{C}^n)$ as the set of all functions $u \in \mathcal{L}^S(\mathbb{C}^n)$
    satisfying $H_S + c_u \leq u$, for some constant $c_u$.
This leads to our definition of the
    \emph{Siciak-Zakharyuta function of $E \subset \mathbb{C}^n$ and $S$ with weight
    $q \colon E \rightarrow \mathbb{R} \cup \{+\infty\}$},
\begin{equation*}
    V^S_{E, q}(z)
    =
    \sup\{u(z) \,;\, u \in \mathcal{L}^S(\mathbb{C}^n), u|_E \leq q\},
    \quad
    z \in \mathbb{C}^n.
\end{equation*}
We write $V^S_K = V^S_{K, 0}$, $V_{K, q} = V^\Sigma_{K, q}$, and $V_K = V^\Sigma_{K, 0}$,
    where $\Sigma$ is the standard simplex in $\mathbb{R}^n$
    given by $\Sigma = \operatorname{ch}\{0, e_1, \dots, e_n\}$, where
    $\operatorname{ch} A$ denotes the closed convex hull of the set $A$.
This is justified by the fact that
$
  \mathcal{L}^\Sigma(\mathbb{C}^n)
=
  \mathcal{L}(\mathbb{C}^n).
$

A weight $q \colon E \rightarrow \mathbb{R} \cup \{+\infty\}$ is called \emph{admissible on $E$} if
    $q$ is lower semicontinuous,
    the set $\{z \in E \,;\, q(z) < +\infty\}$ is non-pluripolar, and
    $\lim_{E \ni z, |z| \rightarrow \infty} (H_S(z) - q(z)) = -\infty$, if $E$ is unbounded.
Note that, the existence of an admissible weight on $E$ implies that $E$ is non-pluripolar.

Fundamental results of these Siciak-Zakharyuta function are developed in \cite{MagSigSigSno:2023}.
There, it is proven that $V^S_{K, q}$ is lower semicontinuous on $\mathbb{C}^{*n}$,
    but it is not proven that it is lower semicontinuous on all of $\mathbb{C}^n$.
The obstruction to proving lower semicontinuity on $\mathbb{C}^n$ is a suitable method of approximating functions in
    $\mathcal{L}^S(\mathbb{C}^n)$
    by functions in $\mathcal{L}^S \cap \mathcal{C}(\mathbb{C}^n)$,
    where $\mathcal{C}(\mathbb{C}^n)$ denotes all continuous functions on $\mathbb{C}^n$,
    since smoothing by convolution with a standard smoothing kernel does not always preserve
    $\mathcal{L}^S(\mathbb{C}^n)$.
In fact, $\mathcal{L}^S(\mathbb{C}^n)$ is preserved by the standard convolution operator if and only if $S$ is a
    \emph{lower set},
    that is when $S$ is such that for all $x \in S$ we have that $[0, x_1] \times \dots \times [0, x_n] \subset S$.
See \cite[Thm.~5.8]{MagSigSigSno:2023}.

In Section \ref{sec:2}, we demonstrate the connection between the lower semicontinuity of $V^S_{K, q}$
    and regularization in $\mathcal{L}^S(\mathbb{C}^n)$.
We also show that if $S$ contains a neighborhood of $0$ in $\mathbb{R}^n_+$ then
    the standard convolution operator,
    along with gluing,
    suffices to show that $V^S_{K, q}$ is lower semicontinuous.
Namely, we prove the following.
\begin{theorem}
\label{thm:1.6}
Let
    $S \subset \mathbb{R}^n_+$ be a convex compact neighborhood of $0$ in $\mathbb{R}^n_+$,
    $K \subset \mathbb{C}^n$ be compact, and
    $q \colon K \rightarrow \mathbb{R} \cup \{+\infty\}$ an admissible weight on $K$.
Then $V^S_{K, q}$ is lower semicontinuous.
\end{theorem}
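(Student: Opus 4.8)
The plan is to reduce the statement to an approximation property inside $\mathcal{L}^S(\mathbb{C}^n)$ and then to verify that property by induction on the dimension $n$, using the standard smoothing kernel in the region bounded away from the coordinate hyperplanes and the lower-dimensional structure of $H_S$ near them.

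First I would isolate, as in Section~\ref{sec:2}, the reduction: if every $u\in\mathcal{L}^S(\mathbb{C}^n)$ is the pointwise limit of a decreasing sequence in $\mathcal{L}^S(\mathbb{C}^n)\cap\mathcal{C}(\mathbb{C}^n)$, then $V^S_{K,q}$ is lower semicontinuous for every compact $K$ and every admissible $q$. Indeed, given a competitor $u$ for $V^S_{K,q}$ and such a sequence $u_j\searrow u$, replace $u_j$ by $u_j-c_j$ with $c_j=\sup_K(u_j-q)^+\in[0,\infty)$; since $q$ is lower semicontinuous, $K$ is compact and $u|_K\le q$, a short compactness argument applied to the maximizers of $u_j-q$ gives $\limsup_j c_j\le 0$, so each $u_j-c_j$ is a continuous competitor and $(u_j-c_j)(z)\to u(z)$. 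Hence $V^S_{K,q}$ is a supremum of continuous functions, and the theorem follows once the approximation property is known.

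Normalizing $u\le H_S$, the function $u*\chi_\delta$ is smooth, plurisubharmonic and decreases to $u$; using that $S\subset\mathbb{R}^n_+$ makes $\varphi_S$ monotone increasing, together with subadditivity of $\varphi_S$, one gets on $\{z;\ |z_j|\ge r\text{ for all }j\}$, for $\delta<r/2$, the estimate $u*\chi_\delta\le H_S+(\delta/r)\,\varphi_S(1,\dots,1)$. So the only obstruction to $u*\chi_\delta\in\mathcal{L}^S(\mathbb{C}^n)$ lies in a neighbourhood of the union of the coordinate hyperplanes, where the growth of $u*\chi_\delta$ reflects $u$ just off a hyperplane (governed by a projection of $S$) rather than on it (governed by a section of $S$); these differ exactly when $S$ fails to be a lower set, consistently with \cite[Thm.~5.8]{MagSigSigSno:2023}. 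To correct this I would induct on $n$, the case $n=1$ being immediate since every compact convex set containing $0$ in $\mathbb{R}_+$ is a lower set. Each section $S_J=\{y;\ (y,0)\in S\text{ with zeros in the slots }J\}$ is again a compact convex neighbourhood of $0$ in the corresponding $\mathbb{R}^{n-|J|}_+$, because $[0,\varepsilon]^n\subset S$; the restriction of $u$ to $\{z_J=0\}$ lies in $\mathcal{L}^{S_J}$ of that subspace by \cite[Props.~3.3 and 3.4]{MagSigSigSno:2023}, hence by the inductive hypothesis it is a decreasing limit of continuous functions there; and, again by \cite[Props.~3.3 and 3.4]{MagSigSigSno:2023} (which give $H_S\ge H_{S_J}$ with equality on $\{z_J=0\}$ and describe the transition), these lower-dimensional regularizations can be propagated into a neighbourhood of $\{z_J=0\}$ in $\mathbb{C}^n$. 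Finally I would patch $u*\chi_\delta$, where the estimate above controls it, with the propagated functions near the coordinate hyperplanes by the gluing construction for plurisubharmonic functions, checking the sub-mean-value transition along the interfaces, and let $\delta$ and the auxiliary parameters tend to their limits to obtain the required decreasing continuous approximants.

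The main obstacle is this last patching. Since a plurisubharmonic gluing can only raise a function on a subdomain, whereas near the hyperplanes one must bring the over-shooting convolution down to the slower order of growth dictated by the sections of $S$, the propagated lower-dimensional approximants must be arranged so that they simultaneously stay below $H_S$ up to a constant, dominate $u*\chi_\delta$ on the interface regions, and still converge back to $u$. Reconciling these — in effect controlling how the growth of $H_S$ just off a coordinate subspace compares with its growth on that subspace — is precisely where the hypothesis that $S$ is a neighbourhood of $0$ enters: it keeps the inductive family stable under taking sections and keeps that growth gap bounded, so that the standard convolution together with gluing really does suffice.
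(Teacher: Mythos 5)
Your opening reduction (lower semicontinuity of $V^S_{K,q}$ follows if every $u\in\mathcal{L}^S(\mathbb{C}^n)$ is a decreasing limit of continuous members of $\mathcal{L}^S(\mathbb{C}^n)$) is correct and matches Section~\ref{sec:2}. But the approximation property you then try to establish is precisely what the paper does \emph{not} prove --- it is stated there as an open question whether any of these regularizations work on all of $\mathbb{C}^n$ --- and your inductive patching scheme does not close the gap. The fatal step is the one you flag yourself: gluing plurisubharmonic functions by $\max$ can only raise a function on a subdomain, yet near the coordinate hyperplanes (as $z\to\infty$ along them) the convolution $u*\chi_\delta$ can exceed $H_S+c$ by an \emph{unbounded} amount whenever $S$ is not a lower set. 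This is exactly the content of Theorem~\ref{thm:5.1}: $z\mapsto H_S(z+w)-H_S(z)$ is unbounded above for suitable small $w$, so $\sup_{|w|\le\delta}H_S(\cdot+w)-H_S$ is unbounded, and hence no function bounded above by $H_S+c'$ --- in particular no propagated lower-dimensional approximant in $\mathcal{L}^S(\mathbb{C}^n)$ --- can dominate $u*\chi_\delta$ on the interface where you need to glue. Your closing claim that the neighbourhood-of-$0$ hypothesis ``keeps that growth gap bounded'' is false: the set $S=\operatorname{ch}\{(0,0),(a,0),(0,a),(b,a)\}$ of Example~\ref{ex:1.2} is a compact convex neighbourhood of $0$ in $\mathbb{R}^2_+$, is not a lower set, and exhibits exactly this unbounded gap.

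The paper's proof avoids the approximation property altogether by exploiting the specific structure of $V^S_{K,q}$. For each fixed point $w$ and $\varepsilon>0$ it takes a competitor $u$, replaces it by $tu$ with $t<1$ close to $1$, convolves with $\chi_\delta$, and glues with $H_S-C$ \emph{outside a large polydisc}, where $C$ is the constant from \cite[Prop.~4.5]{MagSigSigSno:2023} giving $H_S-C\le V^S_{K,q}$. The neighbourhood-of-$0$ hypothesis enters only through $H_S\ge a\log^+\|\cdot\|_\infty$, which makes $(1-t)H_S\to+\infty$ and forces $(tu)*\chi_\delta<H_S-C$ on the boundary of the polydisc, so the gluing is legitimate. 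The convolution is thus used only on a compact set, where its growth at infinity is irrelevant; the resulting continuous competitor need not decrease to $u$ globally, only approximate $u(w)$ from below within $\varepsilon$. If you want to salvage your write-up, replace the global approximation programme by this localized scaling-and-gluing construction.
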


This is significantly stronger than the case when $S$ is a lower set.
For example, if $S_1, S_2, \dots$ are lower sets,
    then so is $S = \cap_{j = 1}^\infty S_j$.
However,
    for an arbitrary convex compact $S \subset \mathbb{R}^n_+$, with $0 \in S$,
    we can define $S_j = \operatorname{ch}\big ((1/j) \Sigma \cup S \big)$.
Then $S = \cap_{j = 1}^\infty S_j$, so
  $S$ is the intersection of a decreasing sequence of neighborhoods of $0$ in $\mathbb{R}^n_+$.
This idea, along with \cite[Prop.~4.8(i)]{MagSigSigSno:2023},
    can sometimes be used to strengthen results,
  as is done in \cite[the proof of Thm.~1.1]{Sno:2024a}.

An important consequence of Theorem \ref{thm:1.6} is sharpening the Siciak-Zakharyuta theorem proved in
    \cite[Thm.~1.1]{MagSigSig:2023}.

In Sections \ref{sec:3}-\ref{sec:5} we study regularization operators other than standard convolution operator.
In Section \ref{sec:3} we consider a generalization of an operator of Siciak \cite[Prop.~1.3]{Sic:1981}.
\begin{theorem}
\label{thm:1.1}
Let
    $S \subset \mathbb{R}_+^n$ be a lower set,
    $u \in \mathcal{L}^S(\mathbb{C}^n)$ be bounded below,
    $\mu$ be a distance function on $\mathbb{C}^n$,
    $u \leq H_S + c_u$ for some constant $c_u$, and
    $\delta \in ]0, \sigma_S^{-1}r_\mu e^{c_u}[$,
        where $\sigma_S = \varphi_S(1, \dots, 1)$ and $r_\mu = \inf_{|z| = 1} \mu(z) > 0$.
Then
\begin{equation}
\label{eq:1.1}
    R^a_{\mu, \delta} u(z)
    =
    -\log \inf_{w \in \mathbb{C}^n}
    \{
        e^{-u(w)} + \delta^{-1} \mu(z - w)
    \},
    \quad
    z \in \mathbb{C}^n,
\end{equation}
is in $\mathcal{L}^S(\mathbb{C}^n) \cap \mathcal{C}(\mathbb{C}^n)$ and $R^a_{\mu, \delta} u \searrow u$,
    as $\delta \searrow 0$.
\end{theorem}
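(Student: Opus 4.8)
The plan is to analyze the operator $R^a_{\mu,\delta}$ by transporting the infimal convolution through the exponential map. Writing $v = e^{-u}$ (a nonnegative continuous-from-below function bounded above since $u$ is bounded below), the quantity inside the logarithm is $\inf_w \{ v(w) + \delta^{-1}\mu(z-w)\}$, which is the classical infimal convolution (Moreau-Yosida type regularization) of $v$ with the distance function $\mu/\delta$. I would first record the standard facts about this infimal convolution: since $\mu$ is a distance function (so positively homogeneous of degree one, hence $\delta^{-1}\mu$ is Lipschitz with constant $\delta^{-1}r_\mu^{-1}$ in the Euclidean metric up to a constant, using $r_\mu = \inf_{|z|=1}\mu(z)>0$), the infimal convolution $v_\delta(z) = \inf_w\{v(w)+\delta^{-1}\mu(z-w)\}$ is finite, everywhere positive (this is where we will need the upper bound on $\delta$), and Lipschitz continuous on $\mathbb{C}^n$; moreover $v_\delta \nearrow v$ pointwise as $\delta \searrow 0$ by lower semicontinuity of $v$. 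Taking $-\log$ then gives that $R^a_{\mu,\delta}u = -\log v_\delta$ is finite, continuous, and decreases to $-\log v = u$ as $\delta \searrow 0$. The positivity of $v_\delta$ is guaranteed precisely by the hypothesis $\delta < \sigma_S^{-1}e^{c_u}r_\mu$: from $u \le H_S + c_u$ we get $v(w) = e^{-u(w)} \ge e^{-c_u}e^{-H_S(w)}$, and one estimates $e^{-H_S(w)} + \delta^{-1}\mu(z-w)$ from below away from zero uniformly in $w$ by splitting into the region where $\mu(z-w)$ is large (second term dominates) and where $w$ is near $z$ (first term is bounded below near $e^{-H_S(z)}$, using continuity of $H_S$ and the growth rate $\sigma_S = \varphi_S(1,\dots,1)$ controlling how fast $e^{-H_S}$ can decay).

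The substantive part is showing $R^a_{\mu,\delta}u \in \mathcal{PSH}(\mathbb{C}^n)$ and that it stays in $\mathcal{L}^S(\mathbb{C}^n)$. For plurisubharmonicity I would follow Siciak's argument: fix a complex line $\ell \ni \zeta \mapsto z_0 + \zeta a$ and show $\zeta \mapsto R^a_{\mu,\delta}u(z_0+\zeta a)$ is subharmonic. Since $-\log\inf_j(\cdots)$ behaves well under taking infima only through the exponential, the key is that for each fixed $w$ the function $z \mapsto -\log(e^{-u(w)} + \delta^{-1}\mu(z-w))$ need not itself be plurisubharmonic, so the naive "infimum of psh functions" argument fails; instead one uses that $u$ is plurisubharmonic and a Hartogs-type / disc-averaging estimate on $e^{-u}$. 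Concretely, I expect to use the characterization that $w \mapsto e^{-u(w)}$ is log-superharmonic-free but satisfies the sub-mean-value inequality in the reciprocal-exponential sense, combined with the fact that $\mu$ is the support function of a convex body hence $z\mapsto \mu(z-w)$ arises as a supremum of moduli of affine functions — this is the standard trick making infimal convolutions with such $\mu$ compatible with plurisubharmonicity. Alternatively, and probably cleaner, I would invoke the known result that for $u$ psh, $-\log$ of the infimal convolution of $e^{-u}$ with a distance function is psh (this is essentially Siciak \cite[Prop.~1.3]{Sic:1981} in the classical case; here $\mu$ replaces the Euclidean norm), so the only new content is checking the estimates survive the replacement of the Euclidean norm by $\mu$ and the Lelong class $\mathcal{L}$ by $\mathcal{L}^S$.

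For the Lelong-class membership, I would estimate $R^a_{\mu,\delta}u(z) = -\log v_\delta(z) \le -\log\inf_w\{e^{-c_u}e^{-H_S(w)} + \delta^{-1}\mu(z-w)\}$, choose $w = z$ to get the crude bound $-\log(e^{-c_u}e^{-H_S(z)}) = H_S(z) + c_u$ when $e^{-c_u}e^{-H_S(z)} \le v_\delta(z)$, i.e. $R^a_{\mu,\delta}u \le H_S + c_u$; more precisely, since $v_\delta \le v$ we get $R^a_{\mu,\delta}u = -\log v_\delta \ge -\log v = u$, so $u \le R^a_{\mu,\delta}u$, and for the upper bound $v_\delta(z) \le v(z) $ combined with an estimate showing $v_\delta(z)$ cannot be too small — but actually the cleanest route is: $v_\delta(z) \le e^{-u(z)}$ gives $R^a_{\mu,\delta}u \ge u$ directly, while $v_\delta(z) \ge$ (lower bound from the positivity argument above, which has the form $c\, e^{-H_S(z)}$ for a constant $c = c(\delta, c_u, S, \mu) > 0$) gives $R^a_{\mu,\delta}u(z) \le H_S(z) - \log c$, placing $R^a_{\mu,\delta}u$ in $\mathcal{L}^S(\mathbb{C}^n)$. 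Here the hypothesis that $S$ is a lower set is used to control $H_S$ — in particular that $H_S$ is comparable to $\varphi_S$ on $\log$-coordinates and has the right behaviour near the coordinate hyperplanes — so that the uniform lower bound $c\,e^{-H_S(z)}$ for $v_\delta$ genuinely holds (without the lower-set hypothesis $e^{-H_S}$ can blow up along hyperplanes in a way the infimal convolution with $\mu$ cannot compensate). The main obstacle I anticipate is precisely this last uniform lower bound for $v_\delta$: verifying that $\inf_w\{e^{-H_S(w)} + \delta^{-1}\mu(z-w)\} \ge c\,e^{-H_S(z)}$ uniformly in $z$, which requires a careful case analysis near the coordinate hyperplanes using the structure theory of $H_S$ for lower sets from \cite[Props.~3.3 and 3.4]{MagSigSigSno:2023} and the sharp threshold $\delta < \sigma_S^{-1}e^{c_u}r_\mu$.
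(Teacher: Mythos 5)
Your outline handles the easier parts plausibly (monotone convergence via the Moreau--Yosida mechanism applied to $v=e^{-u}$, and reduction of the $\mathcal{L}^S$-bound to a uniform lower estimate $v_\delta\geq c\,e^{-H_S}$), but it has a genuine gap exactly where the theorem is hard: plurisubharmonicity of $R^a_{\mu,\delta}u$. You correctly note that an infimum of plurisubharmonic functions need not be plurisubharmonic and that some global mechanism is required, but the mechanism you describe does not work. The distance functions of this paper are only assumed continuous, nonvanishing off $0$, and absolutely homogeneous ($\mu(tz)=|t|\mu(z)$ for $t\in\mathbb{C}$); they need not be subadditive, so $\mu$ is not the support function of a convex body and $z\mapsto\mu(z-w)$ is not a supremum of moduli of affine functions. (For the same reason your assertion that $v_\delta$ is Lipschitz is unjustified; continuity of $v_\delta$ has to be obtained by localizing the infimum to a compact set of $w$, or, as below, from the continuity of boundary-distance functions.) Your fallback --- ``invoke the known result that $-\log$ of the infimal convolution of $e^{-u}$ with a distance function is psh'' --- is circular: for general $\mu$ that statement is precisely what must be proved. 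The actual argument, going back to Ferrier and used in the paper, is geometric: the Hartogs domain $\Omega=\{(z,a)\in\mathbb{C}^n\times\mathbb{C}\,;\,|a|<e^{-u(z)}\}$ is pseudoconvex, being a sublevel set of the plurisubharmonic function $(z,a)\mapsto u(z)+\log|a|$; with the distance function $\widehat{\mu}(z,a)=|a|+\delta^{-1}\mu(z)$ on $\mathbb{C}^{n+1}$ one checks that $\widehat{\mu}_\Omega(z,0)=\inf_{w}\{e^{-u(w)}+\delta^{-1}\mu(z-w)\}$, and the Oka characterization of pseudoconvexity then gives that $-\log\widehat{\mu}_\Omega$ is plurisubharmonic and continuous, yielding $R^a_{\mu,\delta}u\in\mathcal{PSH}(\mathbb{C}^n)\cap\mathcal{C}(\mathbb{C}^n)$ in one stroke. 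Without identifying $R^a_{\mu,\delta}u$ as $-\log$ of the boundary distance of a pseudoconvex set, your proof does not close.

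A second, smaller gap: you leave the estimate $\inf_w\{e^{-H_S(w)}+\delta^{-1}\mu(z-w)\}\geq c\,e^{-H_S(z)}$ as ``the main obstacle'' and anticipate a case analysis near the coordinate hyperplanes. None is needed. Since $S$ is a lower set, $H_S(z-w)\leq H_S(z)+\sigma_S|w|$ for all $z,w$ (this is the content of the cited Theorem~5.8 and is exactly where the lower-set hypothesis enters); using $\mu\geq r_\mu|\cdot|$ and substituting $w\mapsto z-w$, the infimum is bounded below by
\begin{equation*}
e^{-c_u}\,e^{-H_S(z)}\inf_{w\in\mathbb{C}^n}\{e^{-\sigma_S|w|}+\delta^{-1}e^{c_u}r_\mu|w|\},
\end{equation*}
and the one-variable function $x\mapsto e^{-ax}+bx$ is increasing when $a<b$. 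The hypothesis $\delta<\sigma_S^{-1}e^{c_u}r_\mu$ says precisely that $\sigma_S<\delta^{-1}e^{c_u}r_\mu$, so the infimum is attained at $w=0$ and equals $1$, giving the clean bound $R^a_{\mu,\delta}u\leq H_S+c_u$. You correctly identify the role of the threshold and of the lower-set hypothesis, but the key Lipschitz inequality for $H_S$ is the missing lemma that makes the estimate a two-line computation rather than an obstacle.
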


With $\mu = | \cdot |$ and $S = \Sigma$, Siciak proved that
    $R^a_{\mu, \delta} u \in \mathcal{L}(\mathbb{C}^n) \cap \mathcal{C}(\mathbb{C}^n)$
    and that $R^a_{\mu, \delta} u \searrow u$, as $\delta \searrow 0$.
In \cite{BayHusLevPer:2020} this process is referred to as \emph{Ferrier approximation},
    since Siciak developed it using a result from Ferrier \cite[Lemma 2, p.~48]{Fer:1973}.
Here it is attributed to Siciak, since Ferrier was not concerned with approximations.
It is claimed in \cite[Prop.~3.1]{BayHusLevPer:2020} that if
    $\mu = | \cdot |$, $u \in \mathcal{L}^S(\mathbb{C}^n)$, and
    $S$ is such that it contains a neighborhood of $0$ in $\mathbb{R}^n_+$,
    then $R^a_{\mu, \delta} u \in \mathcal{L}^S(\mathbb{C}^n) \cap \mathcal{C}(\mathbb{C}^n)$.
Example~\ref{ex:3.1} shows that this claim is false.

The infimum in the definition of the operator $R^a_{\mu, \delta}$ is an
    \emph{infimal convolution} of the functions $e^{-u}$ and
    $\delta^{-1}| \cdot |$.
Details on infimal convolutions and their applications in convexity theory can be found in Rockafellar~\cite{Rockafellar:1970}.
Applications in complex analysis can be found in Kiselman~\cite{Kis:1993} and Halvarsson~\cite{Hal:1994, Hal:1996a, Hal:1996b}.

In Section \ref{sec:4} we consider the following \emph{supremal convolution}.
\begin{theorem}
\label{thm:1.3}
Let
    $0 \in S \subset \mathbb{R}^n_+$ be compact convex,
    $1_n = (1, \dots, 1) \in \mathbb{N}^n$,
    $\sigma_S = \varphi_S(1_n)$,
    $\delta \in ]0, \sigma_S^{-1}[$,
    $u \in \mathcal{L}^S(\mathbb{C}^n)$ be locally bounded,
    and set
\begin{equation*}
    R^b_\delta u(z)
    =
    \sup_{w \in \mathbb{C}^n}
    \big \{
        u(Zw) - \delta^{-1}\log(\|w - 1_n\|_\infty + 1)
    \big \},
    \quad
    z \in \mathbb{C}^n,
\end{equation*}
where $Zw = (z_1w_1, \dots, z_nw_n)$.
Then
    $R^b_\delta u \in \mathcal{L}^S(\mathbb{C}^n)$,
    $R^b_\delta u$ is continuous on $\mathbb{C}^{*n}$, and
    $R^b_\delta u \searrow u$, as $\delta \searrow 0$.
\end{theorem}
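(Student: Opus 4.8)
The plan is to verify the three assertions in turn: membership in $\mathcal{L}^S(\mathbb{C}^n)$, continuity on $\mathbb{C}^{*n}$, and the decreasing convergence $R^b_\delta u \searrow u$. Throughout, the key device is the multiplicative substitution $w \mapsto Zw$, which interacts well with $H_S$ because $H_S$ is, on $\mathbb{C}^{*n}$, the composition of $\varphi_S$ with the coordinatewise $\log|\cdot|$; hence $H_S(Zw) = \varphi_S\big(\log|z_1|+\log|w_1|,\dots\big)$ splits subadditively as $H_S(Zw) \le H_S(z) + H_S(w)$ (and symmetrically), using convexity/subadditivity of $\varphi_S$. This splitting is what will let the growth of $u$ in the $Zw$-slot be absorbed into $H_S(z)$ plus a penalty that the $\log(\|w-1_n\|_\infty+1)$ term controls.

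First I would establish the upper bound $R^b_\delta u \le H_S + c$. Write $u \le H_S + c_u$. Then for $w \in \mathbb{C}^{*n}$,
\[
 u(Zw) - \delta^{-1}\log(\|w-1_n\|_\infty+1)
 \le H_S(z) + H_S(w) + c_u - \delta^{-1}\log(\|w-1_n\|_\infty+1).
\]
The crux is the elementary estimate $H_S(w) \le \sigma_S \log^+\|w\|_\infty$ (since $S \subset \mathbb{R}^n_+$ and $\varphi_S$ is monotone on $\mathbb{R}^n_+$, with $\varphi_S(1_n)=\sigma_S$), together with $\|w\|_\infty \le \|w-1_n\|_\infty + 1$. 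Hence $H_S(w) \le \sigma_S \log(\|w-1_n\|_\infty+1)$, and since $\delta^{-1} > \sigma_S$ the bracket is bounded above by $H_S(z) + c_u - (\delta^{-1}-\sigma_S)\log(\|w-1_n\|_\infty+1) \le H_S(z) + c_u$, after taking the supremum over $w$ (the penalty is nonnegative and the supremum is at worst at $w = 1_n$). One must be slightly careful to pass this bound from $\mathbb{C}^{*n}$ to all of $\mathbb{C}^n$ via the upper-semicontinuous regularization defining $H_S$ there, but this is routine given that $H_S$ is continuous on $\mathbb{C}^n$. Plurisubharmonicity: $R^b_\delta u$ is a supremum of the plurisubharmonic functions $z \mapsto u(Zw) - \delta^{-1}\log(\|w-1_n\|_\infty+1)$ (for fixed $w$, $z \mapsto u(Zw)$ is psh as a composition of $u$ with the holomorphic map $z \mapsto Zw$), it is locally bounded above by the previous step, so its usc regularization is psh; one then argues the sup is already usc, or identifies $(R^b_\delta u)^* = R^b_\delta u$, to conclude $R^b_\delta u \in \mathcal{PSH}(\mathbb{C}^n)$, hence in $\mathcal{L}^S(\mathbb{C}^n)$.

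Next, continuity on $\mathbb{C}^{*n}$: here I would exploit that on $\mathbb{C}^{*n}$ the map $(z,w) \mapsto Zw$ is a diffeomorphism in each variable and the substitution can be re-centered. Fix a compact $L \subset \mathbb{C}^{*n}$; since $u$ is locally bounded and the penalty forces the effective supremum to be taken over $w$ in a bounded set (those with $\log(\|w-1_n\|_\infty+1) \lesssim H_S(Zw) - u(Zw) + C$, which for $z \in L$ confines $w$ to a fixed compact subset of $\mathbb{C}^n$ once the penalty coefficient exceeds $\sigma_S$), $R^b_\delta u$ on $L$ is a sup of a uniformly equicontinuous-ish family — more precisely one shows $R^b_\delta u$ is both usc (sup of continuous functions need not be, but here the restriction to a compact $w$-range plus upper semicontinuity of $u$ gives usc) and lsc (standard for suprema). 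The cleanest route is: lower semicontinuity is automatic since it is a supremum of continuous functions of $z$; for upper semicontinuity on $\mathbb{C}^{*n}$ use that the sup is attained (the objective is usc in $w$ on the relevant compact set and $\to -\infty$ at infinity) and that a small change in $z \in \mathbb{C}^{*n}$ can be transferred to a small multiplicative change in $w$, so an optimal $w$ for $z$ yields a near-optimal competitor for nearby $z'$, giving $R^b_\delta u(z') \ge R^b_\delta u(z) - o(1)$ and hence, by symmetry, continuity. This transfer step is where the restriction to $\mathbb{C}^{*n}$ is essential and is, I expect, the main technical obstacle — on coordinate hyperplanes the multiplicative substitution degenerates.

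Finally, $R^b_\delta u \searrow u$ as $\delta \searrow 0$. Monotonicity in $\delta$: as $\delta$ decreases, $\delta^{-1}$ increases, so the penalty grows and the sup decreases, giving $R^b_{\delta'} u \le R^b_\delta u$ for $\delta' < \delta$; also taking $w = 1_n$ shows $R^b_\delta u \ge u$ always. It remains to show $\inf_{\delta>0} R^b_\delta u \le u$, equivalently that for each $z$ and each $\varepsilon>0$, for small $\delta$ the supremum is not achieved far from $w = 1_n$: from Step 1, any $w$ with $R^b_\delta u(z) \le u(Zw) - \delta^{-1}\log(\|w-1_n\|_\infty+1)$ competitive must satisfy $(\delta^{-1}-\sigma_S)\log(\|w-1_n\|_\infty+1) \le C(z)$, so $\|w-1_n\|_\infty \to 0$ as $\delta \to 0$; then upper semicontinuity of $u$ at $z = Z 1_n$ gives $\limsup_{\delta\to 0} u(Zw) \le u(z)$, while the penalty is nonnegative, so $\limsup_{\delta \to 0} R^b_\delta u(z) \le u(z)$. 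Combining, $R^b_\delta u \searrow u$ pointwise. The only genuinely delicate point in this last step is ensuring the confinement $\|w-1_n\|_\infty \to 0$ is uniform enough to invoke usc of $u$; since $u$ is locally bounded this is immediate.
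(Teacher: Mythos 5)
Your growth estimate and your proof that $R^b_\delta u \searrow u$ are essentially the paper's arguments: the subadditivity $H_S(Zw)\le H_S(z)+H_S(w)$, the bound $H_S(w)\le\sigma_S\log(\|w-1_n\|_\infty+1)$, the condition $\delta^{-1}>\sigma_S$, and the confinement of competitive $w$ to shrinking neighborhoods of $1_n$ all play the same roles there. Your continuity argument on $\mathbb{C}^{*n}$ (transferring an optimal $w$ for $z$ to the competitor $(Z')^{-1}Zw$ for nearby $z'$) is a workable variant of the paper's change of variables $w\mapsto Z^{-1}w$. Note, however, that your claim that lower semicontinuity is ``automatic since it is a supremum of continuous functions of $z$'' is wrong as stated: $z\mapsto u(Zw)$ is only upper semicontinuous. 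It is precisely the change of variables that rewrites $R^b_\delta u(z)=\sup_w\{u(w)-\delta^{-1}\log(\|Z^{-1}w-1_n\|_\infty+1)\}$ as a supremum of functions continuous in $z$ on $\mathbb{C}^{*n}$; your transfer argument accomplishes the same thing, so this is a presentational slip rather than a fatal one.

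The genuine gap is plurisubharmonicity of $R^b_\delta u$ on all of $\mathbb{C}^n$, which is required for $R^b_\delta u\in\mathcal{L}^S(\mathbb{C}^n)$. You write that ``one then argues the sup is already usc, or identifies $(R^b_\delta u)^*=R^b_\delta u$'' --- but that identity is exactly the assertion to be proved, a supremum of a locally upper bounded family of plurisubharmonic functions need not be upper semicontinuous, and the only upper-semicontinuity argument you actually give is the transfer argument, which you yourself observe degenerates on the coordinate hyperplanes. This is where the paper does its main work: it replaces the family by $\mathcal{F}_{u,\delta}=\{\max\{u,u_{\delta,w}\}\,;\,w\in\mathbb{C}^n\}$, shows it is relatively compact in $\mathcal{PSH}(\mathbb{C}^n)$ by the Montel property, proves it is closed (using exactly your confinement estimate: for $z$ in a fixed ball only $w$ in a fixed compact set contribute, so an unbounded sequence $(w_j)$ eventually yields $\max\{u,u_{\delta,w_j}\}=u$ there, while a bounded one has a convergent subsequence whose $L^1_{\text{loc}}$-limit is identified almost everywhere, hence everywhere, with a member of the family), and then invokes Lemma \ref{lem:3.1}, that the supremum of a compact family in $\mathcal{PSH}(\mathbb{C}^n)$ is upper semicontinuous. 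Alternatively, one could run your compactness-of-competitors argument directly at an arbitrary $a\in\mathbb{C}^n$, using joint upper semicontinuity of $(z,w)\mapsto u(Zw)$ together with the confinement to get $\varlimsup_{z\to a}R^b_\delta u(z)\le R^b_\delta u(a)$; but some such argument must be supplied, and as written the proposal leaves this central step open.
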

The meaning of $Zw$ is justified in Section \ref{sec:4}.
We note that $\sigma_S$ may be $0$.
This only happens when $S = \{0\}$, which is a case of no interest, since then $\mathcal{L}^S(\mathbb{C}^n)$
    would only contain the constant functions.

In Section \ref{sec:5} we consider two related operators.
The first one was developed in \cite[Section 5]{MagSigSigSno:2023}.
\begin{theorem}
\label{thm:1.4}
Let
    $0 \in S \subset \mathbb{R}^n_+$ be compact convex,
    $\delta \in ]0, 1[$, 
    $u \in \mathcal{L}^S(\mathbb{C}^n)$,
    and set
\begin{equation*}
    R^c_\delta u(z)
    =
    \int_{\mathbb{C}^n} u(Zw) \psi_\delta(w)\,d\lambda(w),
    \quad
    z \in \mathbb{C}^n,
\end{equation*}
where $\psi_\delta(w) = \delta^{-2n} \psi\big((w - 1_n)/\delta\big)$
    and $\psi$ is a smoothing kernel that is rotationally invariant in each variable.
Then
    $R^c_\delta u \in \mathcal{L}^S(\mathbb{C}^n)$,
    $R^c_\delta u$ is smooth on $\mathbb{C}^{*n}$, and
    $R^c_\delta u \searrow u$, as $\delta \searrow 0$.
If $u$ is locally bounded below and $\lim_{z \rightarrow a} u_z = u_a$, in $L^1_{\text{loc}}(\mathbb{C}^n)$,
    for all $a \in \mathbb{C}^n$, where $u_z$ is given by $u_z(w) = u(Zw)$, for $w \in \mathbb{C}^n$,
    then $R^c_\delta u \in \mathcal{C}(\mathbb{C}^n)$.
\end{theorem}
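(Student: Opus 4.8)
The plan is to exploit the multiplicative change of variables $w \mapsto Zw$ that turns the operator $R^c_\delta$ into an ordinary additive convolution in logarithmic coordinates, so that the standard smoothing theory applies. First I would verify that $R^c_\delta u \in \mathcal{L}^S(\mathbb{C}^n)$: since $\psi_\delta$ is supported in a polydisc around $1_n$ of radius $\delta < 1$, for $w$ in that support one has $|z_j w_j|$ comparable to $|z_j|$ up to a factor in $[1-\delta, 1+\delta]$, and using that $H_S$ is the supporting function of a set in $\mathbb{R}^n_+$ together with its homogeneity and subadditivity one gets $H_S(Zw) \le H_S(z) + C_\delta$ for a constant $C_\delta$ depending only on $\delta$ and $S$ (this is where one needs $\sigma_S = \varphi_S(1_n)$ to control the error, exactly as in the analogous estimates of \cite[Section 5]{MagSigSigSno:2023}). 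Combining with $u \le H_S + c_u$ and integrating against $\psi_\delta$ gives $R^c_\delta u \le H_S + c_u + C_\delta$. Plurisubharmonicity follows because $R^c_\delta u$ is, after the change of variables, an average of the plurisubharmonic functions $z \mapsto u(Zw)$ over the parameter $w$, and smoothness on $\mathbb{C}^{*n}$ follows from differentiating under the integral sign after pushing the $\delta$-dilation onto $u$ rather than $\psi$ — legitimate on $\mathbb{C}^{*n}$ since there $Zw$ stays in a compact subset of $\mathbb{C}^{*n}$ where $u$ is bounded.

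Next I would prove the monotone convergence $R^c_\delta u \searrow u$ as $\delta \searrow 0$. Monotonicity in $\delta$ is the subtle point: it should follow from the fact that $w \mapsto u(Zw)$ is, in the coordinates $t_j = \log|w_j|$ (with the angular variables), a convex-in-$t$ and subharmonic function, so that a suitably normalized family of averages is increasing in the dilation parameter — here one uses that $\psi$ is rotationally invariant in each variable and the sub-mean-value property in the logarithmic variables; this mimics the classical argument that $u * \chi_\delta \searrow u$. Pointwise convergence to $u$ at a point $a$ follows from upper semicontinuity of $u$ for the $\limsup$ and from the sub-mean-value inequality (in the appropriate coordinates, valid even at points of $\mathbb{C}^n \setminus \mathbb{C}^{*n}$ by the $\limsup$-definition of $H_S$ and standard potential theory) for the liminf. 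One must be slightly careful at $a \in \mathbb{C}^n \setminus \mathbb{C}^{*n}$, where the change of variables degenerates; there I would argue directly that $R^c_\delta u(a) = \int u(Z_a w)\psi_\delta(w)\,d\lambda(w)$ with $Z_a w$ having zero entries exactly where $a$ does, reducing to the lower-dimensional behavior of $H_S$ described in \cite[Props.~3.3 and 3.4]{MagSigSigSno:2023}.

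Finally, the continuity statement under the extra hypotheses. Assume $u$ is locally bounded below and $u_z \to u_a$ in $L^1_{\mathrm{loc}}$ as $z \to a$, for every $a$. Writing $R^c_\delta u(z) = \int u_z(w)\psi_\delta(w)\,d\lambda(w) = (u_z * \tilde\psi_\delta)(1_n)$ type expression, continuity of $z \mapsto R^c_\delta u(z)$ reduces to: the map $z \mapsto u_z$ being $L^1_{\mathrm{loc}}$-continuous (the hypothesis) plus the local uniform boundedness (from local boundedness below of $u$ together with the already-established upper bound $u \le H_S + c_u$, which makes $u_z$ uniformly bounded on the compact support of $\psi_\delta$ for $z$ near $a$), so that $\int u_z \psi_\delta \to \int u_a \psi_\delta$ by dominated convergence. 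This gives continuity of $R^c_\delta u$ on all of $\mathbb{C}^n$, including the coordinate hyperplanes. I expect the main obstacle to be the monotonicity in $\delta$ and the behavior on $\mathbb{C}^n \setminus \mathbb{C}^{*n}$: the multiplicative convolution does not dilate Euclidean balls but "logarithmic" regions, so the usual one-line proof of $u*\chi_\delta \searrow u$ must be re-derived in the logarithmic variables, and the degeneracy of $Zw$ on the hyperplanes needs the lower-dimensional structure of $H_S$ rather than a naive limiting argument.
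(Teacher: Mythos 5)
Your proposal follows essentially the same route as the paper: the growth bound via subadditivity $H_S(Zw)\le H_S(z)+H_S(w)$ together with $H_S(1_n+\delta v)\le\delta\sigma_S$ on the support of $\psi$; smoothness on $\mathbb{C}^{*n}$ by a change of variables that puts the $z$-dependence into the kernel and then differentiating under the integral; and continuity by viewing $R^c_\delta u(z)$ as the pairing of $u_z$ with the fixed test function $\psi_\delta$, so that $L^1_{\text{loc}}$-continuity of $z\mapsto u_z$ does all the work (dominated convergence is not even needed there). You actually give more detail than the paper on the monotone convergence, which the paper defers to \cite[Section 5]{MagSigSigSno:2023}. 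Two small corrections to your sketch, neither fatal: first, $w\mapsto u(Zw)$ is \emph{not} in general convex in $t_j=\log|w_j|$ --- that holds for multicircled functions such as $H_S$ but not for arbitrary $u\in\mathcal{L}^S(\mathbb{C}^n)$; the monotonicity in $\delta$ instead comes from the mechanism you name in the same sentence, namely that $\psi$ is rotationally invariant in each variable, so $R^c_\delta u(z)$ is a $\tilde\psi$-weighted superposition of averages of the plurisubharmonic function $w\mapsto u(Zw)$ over distinguished boundaries of polydiscs centred at $1_n$, and each such average is increasing in each radius by iterated one-variable submean inequalities. Second, for differentiation under the integral sign you push the $Z$-multiplication onto $\psi$ (not ``the $\delta$-dilation onto $u$''), obtaining $\int u(w)\psi_\delta(Z^{-1}w)|z_1\cdots z_n|^{-2}\,d\lambda(w)$, and the justification is that $u\in L^1_{\text{loc}}$ while the kernel and its $z$-derivatives are locally uniformly bounded with compact support --- you cannot assume $u$ is bounded (below) on compacta of $\mathbb{C}^{*n}$, only locally bounded above and locally integrable, which suffices.
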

The assumption that $u$ is locally bounded below ensure that $R^c_\delta u > -\infty$.
Taking $u(z) = \log |z_1|$, we have $R^c_\delta u(z) = -\infty$, for $z \in \{0\} \times \mathbb{C}^{n - 1}$.
So, $R^c_\delta u$ fails to be continuous on $\mathbb{C}^n$.

A possible approach to showing continuity on all of $\mathbb{C}^n$ is to regularize $e^u$ instead of $u$.
The second operator in Section \ref{sec:5} is given by $R^d_\delta u = \log R^c_\delta e^u$.
It must be shown that this function is plurisubharmonic.
\begin{theorem}
\label{thm:1.5}
Let
    $0 \in S \subset \mathbb{R}^n_+$ be compact convex,
    $\delta \in ]0, 1[$, 
    $u \in \mathcal{L}^S(\mathbb{C}^n)$,
    and set
\begin{equation*}
    R^d_\delta u(z)
    =
    \log \int_{\mathbb{C}^n} e^{u(Zw)} \psi_\delta(w)\,d\lambda(w),
    \quad
    z \in \mathbb{C}^n.
\end{equation*}
Then
    $R^d_\delta u \in \mathcal{L}^S(\mathbb{C}^n)$,
    $R^d_\delta u$ is smooth on $\mathbb{C}^{*n}$, and
    $R^d_\delta u \searrow u$, as $\delta \searrow 0$.
\end{theorem}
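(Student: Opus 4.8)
The plan is to mirror the structure of the proof of Theorem~\ref{thm:1.4}, replacing the linear averaging of $u$ by a logarithmic averaging of $e^u$, and then addressing the one genuinely new point: plurisubharmonicity of $z \mapsto \log \int e^{u(Zw)}\psi_\delta(w)\,d\lambda(w)$. First I would record that, writing $u_z(w) = u(Zw)$, the change of variables $w \mapsto (w - 1_n)/\delta$ gives
\begin{equation*}
    R^d_\delta u(z)
    =
    \log \int_{\mathbb{C}^n} e^{u(z_1(1 + \delta w_1), \dots, z_n(1 + \delta w_n))} \psi(w)\,d\lambda(w),
\end{equation*}
so the integrand is, for each fixed $w$ in the support of $\psi$, a function of $z$ of the form $e^{u \circ L_w}$ where $L_w(z) = (z_1(1+\delta w_1), \dots, z_n(1+\delta w_n))$ is a (nonsingular, since $\|w\|_\infty < 1$ and $\delta < 1$) diagonal linear map. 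Hence each $z \mapsto e^{u(L_w z)}$ is plurisubharmonic and nonnegative. Plurisubharmonicity of $R^d_\delta u$ then follows from the standard fact that if $\{f_w\}$ is a family of nonnegative plurisubharmonic functions, measurable in $(w, z)$, then $\log \int f_w\,d\mu(w)$ is plurisubharmonic: one verifies the sub-mean-value inequality on complex lines by Fubini together with the sub-mean-value inequality for each $f_w$ and the fact that $t \mapsto \log t$ is concave and increasing, via Jensen. (Upper semicontinuity and local integrability are routine from the local boundedness above of $e^u$ and the uniform bound below coming from $u \in \mathcal L^S$, namely $e^{u} > 0$ so $R^d_\delta u > -\infty$ everywhere — this is the advantage over $R^c_\delta$.) Smoothness on $\mathbb{C}^{*n}$ is inherited exactly as for $R^c_\delta$: on $\mathbb{C}^{*n}$ one rewrites the integral after the substitution $\zeta = Zw$ so that $z$ enters only through the smooth, compactly-supported-in-$\zeta$ kernel $\psi_\delta(Z^{-1}\zeta)$ times the Jacobian $\prod |z_j|^{-2}$, and differentiation under the integral sign applies.

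Next I would check the growth estimate placing $R^d_\delta u$ in $\mathcal L^S(\mathbb{C}^n)$. From $u \leq H_S + c_u$ and the multiplicativity $H_S(Zw) = H_S(z) + H_S(w)$ on $\mathbb{C}^{*n}$ (with the appropriate limiting version on the coordinate hyperplanes, from \cite[Props.~3.3 and 3.4]{MagSigSigSno:2023}), we get $e^{u(Zw)} \leq e^{c_u} e^{H_S(z)} e^{H_S(w)}$, hence
\begin{equation*}
    R^d_\delta u(z)
    \leq
    H_S(z) + c_u + \log \int_{\mathbb{C}^n} e^{H_S(w)} \psi_\delta(w)\,d\lambda(w),
\end{equation*}
and the last integral is finite because $\psi_\delta$ is compactly supported and $e^{H_S}$ is continuous, hence bounded on compacta; this is where $\delta < 1$ (so that $\psi_\delta$ is supported in a bounded set bounded away from coordinate hyperplanes only up to a fixed radius — precisely, supported in $\|w - 1_n\|_\infty \le \delta < 1$) keeps everything controlled. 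The rotational invariance of $\psi$ in each variable is used, as in Theorem~\ref{thm:1.4}, to ensure that the averaging does not destroy the structure — concretely, it guarantees $R^d_\delta u$ depends on $z$ only through $|z_1|, \dots, |z_n|$ in the relevant reductions and that the sub-mean value manipulations are consistent with the logarithmic homogeneity; I would cite the corresponding lemma from Section~\ref{sec:5} rather than redo it.

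Finally, the convergence $R^d_\delta u \searrow u$ as $\delta \searrow 0$. Monotonicity in $\delta$ is the one subtle piece and I expect it to be the main obstacle: for $R^c_\delta$ it follows from plurisubharmonicity of $w \mapsto u(Zw)$ together with the sub-mean-value property (the average over larger rotational-invariant kernels is larger), and the same argument should transfer to $e^{u(Zw)}$, which is likewise plurisubharmonic, giving that $\int e^{u(Zw)}\psi_\delta(w)\,d\lambda(w)$ is nonincreasing as $\delta \searrow 0$; applying the increasing function $\log$ preserves this. Then $\lim_{\delta \to 0} R^d_\delta u(z) \geq u(z)$ by upper semicontinuity considerations is automatic from the decreasing limit of psh functions being psh and dominating $u$; for the reverse inequality $\lim_{\delta \to 0} R^d_\delta u(z) \le u(z)$ I would argue pointwise: fix $z$, use upper semicontinuity of $u$ at $Zw \to z$ as $\delta \to 0$ uniformly for $w$ in the shrinking support, so $\limsup_{\delta \to 0} e^{u(Zw)} \leq e^{u(z)}$ on that support, and conclude by the (reverse) Fatou lemma — legitimate because $e^{u(Zw)}$ is dominated on a fixed neighborhood of $z$ by a constant, using local boundedness above of $e^u$. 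Combining, $R^d_\delta u \to u$ pointwise and the convergence is decreasing. This completes the proof. $\qed$
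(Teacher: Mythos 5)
Your overall architecture (growth estimate via $H_S(Zw)\leq H_S(z)+H_S(w)$, smoothness on $\mathbb{C}^{*n}$ inherited from $R^c_\delta$ applied to $e^u$, and monotone convergence obtained from $R^c_\delta e^u \searrow e^u$ followed by the increasing map $\log$) matches the paper. But there is a genuine gap at the one point the paper itself flags as the real content of the theorem: plurisubharmonicity of $R^d_\delta u$. The ``standard fact'' you invoke --- that $\log\int f_w\,d\mu(w)$ is plurisubharmonic whenever the $f_w$ are nonnegative and plurisubharmonic --- is false as stated: $f(z)=1+\operatorname{Re}z$ is positive and harmonic near $0$, yet $\Delta\log(1+\operatorname{Re}z)=-(1+\operatorname{Re}z)^{-2}<0$. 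The correct hypothesis is that each $f_w$ is \emph{log}-plurisubharmonic (which your $f_w=e^{u\circ L_w}$ are), and even then your proposed proof does not work: Fubini plus the sub-mean-value inequality for $f_w$ gives $F(z_0)\leq\frac{1}{2\pi}\int_0^{2\pi}F(z_0+re^{i\theta})\,d\theta$ for $F=\int f_w\,d\mu(w)$, hence $\log F(z_0)\leq\log\bigl(\frac{1}{2\pi}\int F\,d\theta\bigr)$; but Jensen with the concave function $\log$ yields $\log\bigl(\frac{1}{2\pi}\int F\,d\theta\bigr)\geq\frac{1}{2\pi}\int\log F\,d\theta$, which is the wrong direction and cannot be chained to give the sub-mean-value inequality for $\log F$. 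Your argument only re-proves that $F$ is plurisubharmonic, which is Theorem \ref{thm:1.4} for $e^u$ and says nothing about $\log F$.

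The missing ingredient is exactly the paper's Lemma \ref{lemma:4.2}: $\log v$ is subharmonic if and only if $v\,e^{2\operatorname{Re}(\tau\,\cdot)}$ is subharmonic for every $\tau\in\mathbb{C}$. With it, one fixes a complex line $\zeta\mapsto a+\zeta b$ and $\tau\in\mathbb{C}$, observes that
\begin{equation*}
    \zeta\mapsto\int_{\mathbb{C}^n}e^{u(W(a+\zeta b))+2\operatorname{Re}(\tau\zeta)}\,\psi_\delta(W)\,d\lambda(W)
\end{equation*}
is subharmonic because each integrand is the exponential of a subharmonic function of $\zeta$ (here linearity of the integral suffices, no logarithm is involved), and then transfers the conclusion to $\log R^c_\delta e^u$ by the lemma. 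Without this multiplier trick (or an equivalent device, such as the variational formula $\log\int e^{v_w}\,d\mu=\sup_g\{\int v_w g\,d\mu-\int g\log g\,d\mu\}$ over probability densities $g$), the plurisubharmonicity claim is unproved. The remaining parts of your proposal are fine, modulo the minor slip that $H_S(Zw)=H_S(z)+H_S(w)$ should be an inequality $\leq$ (subadditivity of $\varphi_S$), which is all the growth estimate needs.
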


In a recent paper of Perera \cite{Per:2024}, a classical result on the regularity of $V_K$ is considered,
    and attempts are made to generalize it to the setting of $V^S_{K, q}$.
Perera \cite{Per:2024} claims to give sufficient conditions for $V^S_{K, q}$ to be Hölder continuous on $\mathbb{C}^n$,
    for convex bodies $S$.
In Section, \ref{sec:6} we show that this can not hold without additional restrictions on $S$.
The main result, Corollary \ref{cor:5.2},
    states that if $S$ is not a lower set then $\mathcal{L}^S_+(\mathbb{C}^n)$
    contains no uniformly continuous functions.
Since $V^{S*}_{K, q} \in \mathcal{L}^S_+(\mathbb{C}^n)$, by \cite[Prop.~4.5]{MagSigSigSno:2023},
    we have that
    $V^S_{K, q}$ can not be Hölder continuous if $S$ is not a lower set.

\subsection*{Acknowledgment}  
The results of this paper are a part of a research project, 
\emph{Holomorphic Approximations and Pluripotential Theory},
with  project grant 
no.~207236-051 supported by the Icelandic Research Fund.
I would like to thank the Fund for its support and the Mathematics Division,
    Science Institute,
    University of Iceland,
    for hosting the project.
I thank my supervisors Benedikt Steinar Magnússon and Ragnar Sigurðsson
    for their guidance and careful reading of the paper.

\section{Regularity of the Siciak-Zakharyuta function}
\label{sec:2}
The main motivation for studying the regularization operators in Sections \ref{sec:3}-\ref{sec:5} is proving that
    $V^S_{K, q}$ is lower semicontinuous on $\mathbb{C}^n$,
    not just $\mathbb{C}^{*n}$, for compact $K \subset \mathbb{C}^n$ and an admissible weight $q$ on $K$.
To prove lower semicontinuity using regularizations, we use the following variant of Dini's theorem.
\begin{theorem}
\label{thm:2.1}
\looseness=-1
Let
    $f_1 \geq f_2 \geq \dots$ be a sequence of upper semicontinuous functions on a compact metric space $X$,
        with pointwise limit $f$, and
    $g$ be a lower semicontinuous function on $X$ such that $f < g$.
Then there exists $j_0 \in \mathbb{N}$ such that $f_j < g$, for all $j > j_0$.
\end{theorem}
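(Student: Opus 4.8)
The plan is to run the standard compactness argument, reducing from an open cover to a single index by exploiting a nesting property. First I would, for each $j \in \mathbb{N}$, introduce the set
\[
    U_j = \{x \in X \,;\, f_j(x) < g(x)\}
\]
and argue that each $U_j$ is open. Fixing $x_0 \in U_j$, I would choose a real number $c$ with $f_j(x_0) < c < g(x_0)$ (such a $c$ exists even if one of the values is infinite, since $f_j(x_0) < g(x_0)$ precludes the only bad configurations); then $\{x \,;\, f_j(x) < c\}$ is open by upper semicontinuity of $f_j$, $\{x \,;\, g(x) > c\}$ is open by lower semicontinuity of $g$, and their intersection is an open neighborhood of $x_0$ contained in $U_j$. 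Inserting the intermediate constant $c$ is the point that needs care, as it sidesteps forming the difference $f_j - g$, which could be an indeterminate $\infty - \infty$; this is really the only delicate spot in the whole argument.

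Next I would observe that the family $(U_j)_{j}$ is increasing. Since $(f_j)$ is decreasing we have $f_{j+1} \leq f_j$ pointwise, so $f_j(x) < g(x)$ forces $f_{j+1}(x) < g(x)$, i.e. $U_j \subset U_{j+1}$. I would then check that $\bigcup_{j} U_j = X$: for any $x \in X$ the hypothesis gives $f_j(x) \searrow f(x)$ and $f(x) < g(x)$, so $f_j(x) < g(x)$ for all sufficiently large $j$, placing $x$ in some $U_j$.

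Finally, I would invoke compactness of $X$. The open cover $(U_j)_{j}$ admits a finite subcover, and because the $U_j$ are nested this finite subcover has a largest member $U_{j_0}$, whence $X = U_{j_0}$. Then for every $j \geq j_0$ we obtain $f_j \leq f_{j_0} < g$ on all of $X$, which yields the conclusion (in particular for all $j > j_0$). The argument is entirely elementary, the only genuine obstacle being the openness of $U_j$ in the presence of infinite values, which the intermediate-constant device resolves.
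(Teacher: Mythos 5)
Your proof is correct and is essentially the same compactness argument as the paper's: the paper covers $X$ by neighborhoods $U_x$ on which some $f_{j_x} < g$, while you cover it by the increasing open sets $U_j = \{f_j < g\}$, which amounts to the same thing. Your intermediate-constant device for openness of $U_j$ is a slightly more careful handling of possibly infinite values than the paper's use of the difference $f_{j_x} - g$, but it does not change the route.
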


\begin{proof}
For every $x \in X$, we can take $j_x \in \mathbb{N}$ such that $f_{j_x}(x) - g(x) < 0$.
Since $f_{j_x}$ is upper semicontinuous and $g$ is lower semicontinuous,
    we can take an open neighborhood $U_x$ of $x$ such that $f_{j_x}(y) - g(y) < 0$, for $y \in U_x$.
Since $X$ is compact, we can take $x_1, \dots, x_\ell \in X$ such that
    $U_{x_1}, \dots, U_{x_\ell}$ is a cover of $X$.
Let $j_0 = \max\{j_{x_1}, \dots, j_{x_\ell}\}$.
Then,
    since $(f_j)_{j \in \mathbb{N}}$ is decreasing,
    we have $f_j < g$, for $j > j_0$.
\end{proof}

\looseness=-1
To see how regularization is used to show that $V^S_{K, q}$ is lower semicontinuous,
    we fix an open set $\Omega \subset \mathbb{C}^n$ and assume that
    for all $u \in \mathcal{L}^S(\mathbb{C}^n)$ there exists a decreasing sequence $(u_j)_{j \in \mathbb{N}}$ 
    in $\mathcal{L}^S(\mathbb{C}^n)$ such that
    $u_j|_\Omega$ is continuous and $u_j \searrow u$.
If we take $\varepsilon > 0$ and assume that $u \leq q$ on $K$ then, by Dini's theorem,
    we can find $j \in \mathbb{N}$, such that
    $u_j - \varepsilon \leq q$ on $K$.
Hence, $V^S_{K, q}$ is given as the supremum over a family of functions continuous on $\Omega$,
    and is therefore lower semicontinuous on $\Omega$.
See Klimek \cite[Lemma~2.3.2]{Kli:1991}.

The regularizations considered in this paper work for $\Omega = \mathbb{C}^{*n}$.
Whether these, or any other, regularizations work on all of $\mathbb{C}^n$ remains an open question.

\looseness=-1
We now turn to the main result in this section, Theorem \ref{thm:1.6}.
To prove it we will regularize using standard convolution,
    as discussed at the start on Section \ref{sec:1}.
This regularization will generally not have that right growth,
    but we will fix this using gluing.

\begin{proof}[Proof of Theorem \ref{thm:1.6}]
Since $q$ is admissible on $K$, it is bounded below.
So, we may assume that $q > 0$, and therefore, that $V^S_{K, q} > 0$,
    since $V^S_{K, q + c} = V^S_{K, q} + c$, for every constant $c$.
There also exists a constant $a > 0$, such that $a \Sigma \subset S$,
    since $S$ contains a neighborhood of $0$ in $\mathbb{R}^n_+$.
This implies that $H_S \geq H_{a\Sigma} \geq a \log^+\| \cdot \|_\infty$.

Fix $w \in \mathbb{C}^n$ and $\varepsilon > 0$,
    take $u \in \mathcal{L}^S(\mathbb{C}^n)$ such that $u|_K \leq q$, and
    let $c_u$ such that $u \leq H_S + c_u$
    and $t \in ]0, 1[$ such that $tu(w) + \varepsilon/2 \geq u(w)$.
By \cite[Prop.~4.5]{MagSigSigSno:2023}, there exists a constant $C > 0$ such that $H_S - C \leq V^S_{K, q}$.
But,
\begin{equation*}
\begin{aligned}
    H_S - C - tu
    &\geq
    H_S - tH_S - C - tc_u
\\
    &\geq
    (1 - t)a \log^+\| \cdot \|_\infty - C - tc_u,
\end{aligned}
\end{equation*}
    so $tu < H_S - C$ on $\partial (r\mathbb{D}^n)$, for $r > R_0 = \max\{1, e^{(C + tc_u)/((1 - t)a)}\}$.
We fix $R > R_0$ such that $w \in R\mathbb{D}^n$.
By Theorem \ref{thm:2.1}, we can choose $\delta > 0$ such that

\smallskip
\begin{tabular}{rl}
    (i)   & $(tu) * \chi_\delta (w) + \varepsilon > u(w)$,
\\
    (ii)  & $(tu) * \chi_\delta < H_S - C$ on $\partial(R \mathbb{D}^n)$, and
\\
    (iii) & $(tu) * \chi_\delta < q$ on $K$,
\end{tabular}

\smallskip
\noindent
since $q > 0$, and therefore $tu < q$.
The function
\begin{equation*}
    u_{\varepsilon, w}(z)
    =
    \bigg \{
    \begin{array}{ll}
        \max
        \big \{
          H_S(z) - C, (tu) * \chi_\delta(z)
        \big \},
        &
        z \in R \mathbb{D}^n,
\\
        H_S(z) - C,
        &
        z \in \mathbb{C}^n \setminus R \mathbb{D}^n,
    \end{array}
\end{equation*}
is plurisubharmonic, by Klimek \cite[Cor.~2.9.15]{Kli:1991}.
So $u_{\varepsilon, w} \in \mathcal{L}^S(\mathbb{C}^n)$.
We also have that $H_S - C \leq V^S_{K, q} \leq q$ on $K$,
    so $u_{\varepsilon, w} \leq q$ on $K$.
Since $w \in R\mathbb{D}^n$, we have
\begin{equation*}
    u_{\varepsilon, w}(w) \geq (tu) * \chi_\delta(w) > u(w) - \varepsilon.
\end{equation*}
We also have that $u_{\varepsilon, w}$ is continuous on $\mathbb{C}^n$,
    since $H_S - C$ and $(tu) * \chi_\delta$ both are.

For every $w \in \mathbb{C}^n$ and $\varepsilon > 0$, we can take $u \in \mathcal{L}^S(\mathbb{C}^n)$,
    such that $u|_K \leq q$ and $u(w) > V^S_{K, q}(w) - \varepsilon$.
Since $u_{\varepsilon, w}(w) > u(w) - \varepsilon$, we have that
\begin{equation*}
    V^S_{K, q}(z)
    =
    \sup
    \{
        u_{\varepsilon, w}(z)
        \,;\,
        u \in \mathcal{L}^S(\mathbb{C}^n),\ 
        \varepsilon > 0,\ 
        w \in \mathbb{C}^n
    \},
    \quad
    z \in \mathbb{C}^n.
\end{equation*}
So, $V^S_{K, q}$ is given as the supremum over a family of continuous functions on $\mathbb{C}^n$, and
  is therefore lower semicontinuous $\mathbb{C}^n$.
\end{proof}

\section{Infimal convolutions}
\label{sec:3}
Recall that Siciak's infimal convolution was given by
\begin{equation*}
    R^a_{\mu, \delta} u(z)
    =
    -\log \inf_{w \in \mathbb{C}^n}
    \{
        e^{-u(w)} + \delta^{-1}\mu(z - w)
    \},
    \quad
    z \in \mathbb{C}^n,
\end{equation*}
with $\mu = | \cdot |$.
In Ferrier \cite{Fer:1973}, it is proven that $R^a_{\mu, \delta} u$ is plurisubharmonic and continuous when
    $u$ is plurisubharmonic, $\delta = 1$, and $\mu = | \cdot |$.
Generalizing for $\delta > 0$ is not significant.
In Siciak \cite{Sic:1981}, it is shown that if
\begin{equation*}
    \log(|z| + 1) - c_u
    \leq
    u(z)
    \leq
    \log(|z| + 1) + c_u,
    \quad
    z \in \mathbb{C}^n,
\end{equation*}
for some constant $c_u$, then $R^a_{\mu, \delta} u(z) \leq \log(|z| + 1) + c_u$, for $\delta < e^{c_u}$, and
    consequently that $R^a_{\mu, \delta} u \in \mathcal{L}(\mathbb{C}^n)$.
It is also shown that $R^a_{\mu, \delta} u \searrow u$, as $\delta \searrow 0$.
The fact that $R^a_{\mu, \delta} u$ is plurisubharmonic is not obvious, but follows from geometric arguments.
We will recall some useful results on pseudoconvex domains before going through the proof.

Recall that $\Omega = \{z \in \mathbb{C}^n \,;\, u(z) < c\}$ is pseudoconvex if $u \in \mathcal{PSH}(\mathbb{C}^n)$.
To see this,
    assume $u$ is continuous and
    take compact $K \subset \Omega$.
Note that, $u \leq c - \delta$ on $K$,
    for some $\delta > 0$, so $u \leq c - \delta$ on $\widehat{K}_{\mathcal{PSH}(\mathbb{C}^n)}$, where
\begin{equation*}
  \widehat{K}_{\mathcal{PSH}(\mathbb{C}^n)}
  =
  \bigg \{
    z \in \mathbb{C}^n
      \,;\, 
    v(z) \leq \sup_{w \in K} u(w),
      \text{ for all }
      u \in \mathcal{PSH}(\mathbb{C}^n)
  \bigg \}
\end{equation*}
is the plurisubharmonically convex hull of $K$ in $\mathbb{C}^n$,
  which is compact.
So, we have that $\widehat{K}_{\mathcal{PSH}(\Omega)} \subset \widehat{K}_{\mathcal{PSH}(\mathbb{C}^n)} \subset \Omega$,
    which implies that $\widehat{K}_{\mathcal{PSH}(\Omega)}$ is relatively compact in $\Omega$.
Hence, $\Omega$ is pseudoconvex.
In the case that $u$ is not continuous,
    we consider the pseudoconvex sets
        $\Omega_\delta = \{z \in \mathbb{C}^n \,;\, u * \chi_\delta(z) < c\}$, for $\delta > 0$.
Since $\Omega_\delta \nearrow \Omega$, as $\delta \searrow 0$,
    we have, by the Behnke-Stein theorem,
    that $\Omega$ is pseudoconvex.

We call a continuous function $\mu \colon \mathbb{C}^n \rightarrow \mathbb{R}_+$ a \emph{distance function} if
    {\bf(i)}
        $\mu(z) = 0$ if and only if $z = 0$ and
    {\bf (ii)}
        $\mu(t z) = |t| \mu(z)$ for $z \in \mathbb{C}^n$ and $t \in \mathbb{C}$.
Note that, by {\bf(ii)}, we have constants $r_\mu = \inf_{|z| = 1} \mu(z)$ and $s_\mu = \sup_{|z| = 1} \mu(z)$
    such that $r_\mu |z| \leq \mu(z) \leq s_\mu |z|$, for $z \in \mathbb{C}^n$,
    and by {\bf(i)} we have that $r_\mu > 0$.
For an open $\Omega \subset \mathbb{C}^n$, we define the \emph{$\mu$-distance to the boundary} by
\begin{equation*}
    \mu_\Omega(z)
    =
    \inf_{w \not \in \Omega} \mu(z - w),
    \quad
    z \in \Omega,
\end{equation*}
and note that it is a continuous function.
Pseudoconvex domains are classically characterized by distances, namely
    an open $\Omega \subset \mathbb{C}^n$ is pseudoconvex if and only if
    $-\log \mu_\Omega$ is plurisubharmonic for every (or some) distance function $\mu$.
See Klimek~\cite[Thm.~2.10.4]{Kli:1991}.

\begin{proof}[Proof of Theorem \ref{thm:1.1}]
To start off we show that $R^a_{\mu, \delta} u$ is continuous and plurisubharmonic.
We let 
\begin{equation*}
    \Omega
    =
    \big \{
      (z, a) \in \mathbb{C}^n \times \mathbb{C} \,;\, |a| < e^{-u(z)}
    \big \}
\end{equation*}
and note that $\Omega$ is a sublevel set of $(z, a) \mapsto u(z) + \log |a|$, so it is pseudoconvex.
We define a distance function $\widehat{\mu}$ on $\mathbb{C}^{n + 1}$
    by $\widehat{\mu}(z, a) = |a| + \delta^{-1}\mu(z)$,
    and see that,
    for $(z, a) \in \mathbb{C}^n \times \mathbb{C}$,
\begin{equation*}
    \widehat{\mu}_\Omega(z, a)
    =
    \inf
    \big \{
        |a - b| + \delta^{-1}\mu(z - w) \,;\, (w, b) \in \mathbb{C}^n \times \mathbb{C}, |b| \geq e^{-u(w)}
    \big \}.
\end{equation*}
By \cite[Thm.~2.10.4]{Kli:1991}, we have that $-\log \widehat{\mu}_\Omega$ is continuous and plurisubharmonic on
    $\Omega \supset \mathbb{C}^n \times \{0\}$, so $\mathbb{C}^n \ni z \mapsto -\log \widehat{\mu}_\Omega(z, 0)$
    is continuous and plurisubharmonic and
\begin{equation*}
\begin{aligned}
    \widehat{\mu}_\Omega(z, 0)
    &=
    \inf
    \big \{
        |b| + \delta^{-1}\mu(z - w) \,;\, (w, b) \in \mathbb{C}^n \times \mathbb{C}, |b| \geq e^{-u(w)}
    \big \}
\\
    &=
    \inf_{w \in \mathbb{C}^n}
    \{
        e^{-u(w)} + \delta^{-1}\mu(z - w)
    \}
    =
    e^{-R^a_{\mu, \delta} u(z)},
    \qquad
    z \in \mathbb{C}^n.
\end{aligned}
\end{equation*}
So, $R^a_{\mu, \delta} u$ is plurisubharmonic and continuous.

Since $S$ is a lower set, we have,
    by \cite[Thm.~5.8]{MagSigSigSno:2023},
    that $H_S(z - w) \leq H_S(z) + \sigma_S |w|$.
Using that $e^{-H_S} \leq 1$, we get
\begin{equation*}
\begin{aligned}
    R^a_{\mu, \delta} u(z)
    &\leq
    -\log \inf_{w \in \mathbb{C}^n}
    \{
        e^{-H_S(z - w)} + \delta^{-1} e^{c_u}r_\mu |w|
    \}
    +
    c_u\\
    &\leq
    -\log \inf_{w \in \mathbb{C}^n}
    \{
        e^{-H_S(z) - \sigma_S |w|} + e^{-H_S(z)}\delta^{-1} e^{c_u}r_\mu |w|
    \}
    +
    c_u\\
    &\leq
    H_S(z)
    -
    \log \inf_{w \in \mathbb{C}^n}
    \{
        e^{-\sigma_S |w|} + \delta^{-1} e^{c_u}r_\mu |w|
    \}
    +
    c_u,
    \quad
    z \in \mathbb{C}^n.
\end{aligned}
\end{equation*}
Note that, if
    $f \colon \mathbb{R}_+ \rightarrow \mathbb{R}$,
    $f(x) = e^{-ax} + bx$ for $a, b > 0$,
    then $f'(x) = b - ae^{-ax} > b - a$.
So, $f$ is increasing if $a < b$.
We have that $\delta < \sigma_S^{-1}e^{c_u}r_\mu$, implying that $\sigma_S < \delta^{-1}e^{c_u}r_\mu$.
Hence, the last infimum is obtained at $w = 0$.
Therefore, $R^a_{\mu, \delta} u(z) \leq H_S(z) + c_u$ and $R^a_{\mu, \delta} u \in \mathcal{L}^S(\mathbb{C}^n)$.

We let $\delta_j \searrow 0$, and set $u_{\delta, w}(z) = -\log\big(e^{-u(z - w)} + \delta^{-1}\mu(w)\big)$ and
\begin{equation*}
    K_j = \{ w \in \mathbb{C}^n \,;\, u_{\delta_j, w}(z) \geq u(z) \}.
\end{equation*}
Since $u$ is bounded below, we have $\varliminf_{|z| \rightarrow \infty} u_{\delta_j, w}(z) = -\infty$,
    and therefore $K_j$ is bounded.
We also have that $u$ is upper semicontinuous, so $K_j$ is closed, and consequently compact.
We also have that $u_{\delta_j, w}(z)$ is upper semicontinuous as a function of $w$,
    so there exists $w_j \in K_j$ such that $R^a_{\mu, \delta_j} u(z) = u_{\delta_j, w_j}(z)$.
Note that, $u_{\delta, w}(z) \searrow -\infty$, as $\delta \searrow 0$,
    for all $z \in \mathbb{C}^n$ and $w \in \mathbb{C}^{n*}$.
So, $\cap_j K_j = \{0\}$, $w_j \rightarrow 0$ as $j \rightarrow \infty$, and
\begin{equation*}
    u(z)
    \leq
    \lim_{j \rightarrow \infty} R^a_{\mu, \delta_j} u(z)
    =
    \lim_{j \rightarrow \infty} u_{\delta_j, w_j}(z)
    \leq
    \varlimsup_{j \rightarrow \infty} u(z - w_j)
    \leq
    u(z),
    \quad
    z \in \mathbb{C}^n.
\end{equation*}
Hence, $R^a_{\mu, \delta} u \searrow u$, as $\delta \searrow 0$.
\end{proof}

The case where $\mu = | \cdot |$ appears in \cite[Prop.~3.1]{BayHusLevPer:2020}.
There it is claimed that the result holds when $S$ is a neighborhood of $0$ in $\mathbb{R}^n_+$.
This claim is false.
\begin{example} 
\label{ex:3.1}
Let
    $S = \operatorname{ch}\{(0, 0), (a, 0), (0, a), (b, a)\}$,
    $a > 0$,
    $b > a(a + 1)$, and
    $\mu = | \cdot |$.
Note that, $S = \operatorname{ch} (a\Sigma \cup \{(b, a)\})$,
    so $S$ contains a neighborhood of $0$ in $\mathbb{R}^2_+$.
We have
\begin{equation*}
    H_S(z_1, z_2)
    =
    \max
    \{
        b\log |z_1| + a\log |z_2|,
        a\log^+ \|z\|_\infty
    \},
    \quad
    z_1, z_2 \in \mathbb{C}^*.
\end{equation*}
Hence, with $r = b/(a + 1) > a$, we have that
\begin{equation*}
    H_S(\zeta, |\zeta|^{-r})
    \geq
    \log|\zeta|^{b - ra},
    \quad
    \zeta \in \mathbb{C}^*.
\end{equation*}
Setting $w = (\zeta, |\zeta|^{-r})$ in \eqref{eq:1.1}, with $\zeta \in \mathbb{C}^*$, yields
\begin{equation*}
\begin{aligned}
    R^a_{\mu, \delta} H_S(\zeta, 0)
    &\geq
    -\log(e^{-H_S(\zeta, |\zeta|^{-r})} + \delta^{-1}|\zeta|^{-r})
\\
    &\geq
    -\log(|\zeta|^{ar - b} + \delta^{-1}|\zeta|^{-r})
\\
    &=
    -\log(|\zeta|^{r(a + 1) - b} + \delta^{-1}) + r \log |\zeta|
\\
    &=
    -\log(1 + \delta^{-1}) + r \log |\zeta|.
\end{aligned}
\end{equation*}
So, $R^a_{\mu, \delta} H_S(\zeta, 0) - H_S(\zeta, 0) \geq (r - a) \log |\zeta| - \log(1 + \delta^{-1})$
    is not bounded above, since $r - a > 0$.
Hence, $R^a_{\mu, \delta} H_S$ is not in $\mathcal{L}^S(\mathbb{C}^n)$.
\end{example}

\section{Supremal convolutions}
\label{sec:4}
Let us recall some topological properties of $\mathcal{PSH}(\Omega)$, with the goal of determining when
    $\sup \mathcal{F} = \sup\{u \,;\, u \in \mathcal{F}\}$ is upper semicontinuous,
    for some locally upper bounded family $\mathcal{F} \subset \mathcal{PSH}(\Omega)$.
For more details, see Hörmander \cite[Thms.~4.1.8-9]{Hormander:LPDO}
    and \cite[Thm.~3.2.11-13]{Hormander:convexity}.
Recall that $\mathcal{PSH}(\Omega)$ is the family of plurisubharmonic functions on $\Omega$
    that are not identically $-\infty$ on any connected component of $\Omega$, so
\begin{equation*}
    \mathcal{PSH}(\Omega)
    \subset
    L^1_{\text{loc}}(\Omega)
    \subset
    \mathcal{D}'(\Omega).
\end{equation*}
The topology $\mathcal{PSH}(\Omega)$ inherits from the weak topology on $\mathcal{D}'(\Omega)$
    coincides with topology it inherits from
    $L_{\text{loc}}^1(\Omega)$,
    as a Fréchet space with semi-norms $f \mapsto \int_K |f|\,d\lambda$, for compact $K$.
With this topology, $\mathcal{PSH}(\Omega)$ is a closed subspace of $L_{\text{loc}}^1(\Omega)$,
    so it is a complete metrizable space.

Furthermore, $\mathcal{PSH}(\Omega)$ has a Montel property, which says that every sequence $u_1, u_2, \dots$
    in $\mathcal{PSH}(\Omega)$,
    that is locally bounded above and does not converge to $-\infty$ uniformly on every compact subset of $\Omega$,
    has a subsequence that converges
    in $\mathcal{PSH}(\Omega)$.

Sometimes, $\sup \mathcal{F} = \sup \mathcal{F}_0$ for some $\mathcal{F}_0 \subset \mathcal{F}$ containing a minimal
    element.
The Montel property then states that $\mathcal{F}_0$ is relatively compact.
If, in addition, $\mathcal{F}_0$ is closed then $\sup \mathcal{F}$ is upper semicontinuous.
This follows from Sigurdsson \cite[Prop.~2.1]{Sig:1991}, which we include for the convenience of the reader.
\begin{lemma}
\label{lem:3.1}
Let $\Omega \subset \mathbb{C}^n$ be open and $\mathcal{F}$ be a compact family in $\mathcal{PSH}(\Omega)$.
Then $\sup \mathcal{F}$ is upper semicontinuous and, consequently, plurisubharmonic.
\end{lemma}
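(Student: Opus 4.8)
The plan is to reduce the whole statement to the single assertion that $v:=\sup\mathcal F$ is upper semicontinuous. Once that is established, plurisubharmonicity is immediate: an upper semicontinuous function coincides with its own upper semicontinuous regularization $v^{\ast}$, and $v^{\ast}$ is plurisubharmonic by the classical theorem on upper envelopes of plurisubharmonic families that are locally bounded above (see \cite{Kli:1991}). I would first record that the compactness hypothesis delivers exactly that local boundedness: a compact subset of $L^{1}_{\text{loc}}(\Omega)$ is bounded, and $\mathcal F$, being compact in $\mathcal{PSH}(\Omega)$ with its $L^{1}_{\text{loc}}$ topology, is such a set, so $\sup_{u\in\mathcal F}\int_{K}|u|\,d\lambda<\infty$ for every compact $K\subset\Omega$; feeding this into the sub-mean value inequality $u(z)\le\lambda(B(z,r))^{-1}\int_{B(z,r)}|u|\,d\lambda$, valid whenever $\overline B(z,r)\subset\Omega$, shows that $\mathcal F$ is uniformly bounded above near every point, so $v<+\infty$ everywhere.

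To prove that $v$ is upper semicontinuous I would argue by contradiction. If $v$ fails to be upper semicontinuous at some $z_{0}\in\Omega$, I would pick a real number $c$ with $v(z_{0})<c<\varlimsup_{w\to z_{0}}v(w)$ and a sequence $z_{j}\to z_{0}$ with $v(z_{j})>c$; since $v(z_{j})=\sup_{u\in\mathcal F}u(z_{j})>c$, for each $j$ there is $u_{j}\in\mathcal F$ with $u_{j}(z_{j})>c$. As $\mathcal F$ is a compact, hence sequentially compact, subset of the metrizable space $\mathcal{PSH}(\Omega)$, after passing to a subsequence we obtain $u_{j_{k}}\to u$ in $L^{1}_{\text{loc}}(\Omega)$ with $u\in\mathcal F$ (here closedness of $\mathcal F$ is used).

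The decisive ingredient is the one-sided semicontinuity of $L^{1}_{\text{loc}}$-convergent sequences of plurisubharmonic functions (see \cite{Hormander:convexity}; it can also be derived directly from the sub-mean value inequality): if plurisubharmonic $g_{k}$ on $\Omega$ are locally uniformly bounded above and $g_{k}\to g$ in $L^{1}_{\text{loc}}(\Omega)$, then $\varlimsup_{k\to\infty}g_{k}(w_{k})\le g(w_{0})$ whenever $w_{k}\to w_{0}$ in $\Omega$. I would apply this with $g_{k}=u_{j_{k}}$, $g=u$, $w_{k}=z_{j_{k}}$, $w_{0}=z_{0}$, and use that $u\in\mathcal F$ forces $u\le v$ pointwise, hence $u(z_{0})\le v(z_{0})<c$; this yields $\varlimsup_{k}u_{j_{k}}(z_{j_{k}})\le u(z_{0})<c$, contradicting $u_{j_{k}}(z_{j_{k}})>c$ for all $k$. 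Thus $v$ is upper semicontinuous, and by the first paragraph it is plurisubharmonic.

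The main obstacle is that pointwise evaluation $u\mapsto u(z)$ is not continuous on $\mathcal{PSH}(\Omega)$ for the $L^{1}_{\text{loc}}$ topology, so $v=\sup\mathcal F$ is not a supremum of continuous functions and, for a general (non-compact) family, need not even equal $v^{\ast}$; compactness is what allows the extraction of an $L^{1}_{\text{loc}}$-convergent subsequence whose pointwise values are then controlled from above by the limit. I expect the delicate points to be: ensuring the limit $u$ lies in $\mathcal F$ (closedness, which is what gives $u\le v$ and not merely $u\le v^{\ast}$), and invoking the semicontinuity lemma in the form that permits the evaluation points $w_{k}$ to vary — the fixed-point version $\varlimsup_{k}g_{k}(z)\le g(z)$ alone would not close the argument, since the $z_{j}$ move with $j$.
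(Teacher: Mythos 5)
Your proof is correct. The reduction to upper semicontinuity, the observation that compactness in $L^1_{\text{loc}}$ forces local uniform upper bounds via the sub-mean value inequality, and the contradiction argument all hold; the ``moving point'' Hartogs-type inequality $\varlimsup_k g_k(w_k)\le g(w_0)$ is genuine (it follows from $g_k(w_k)\le M_r g_k(w_k)$, the joint continuity of $(f,z)\mapsto M_r f(z)$ on $L^1_{\text{loc}}(\Omega)\times\mathbb{C}^n$, and $M_r g(w_0)\searrow g(w_0)$), and you correctly identify that the fixed-point version would not suffice and that closedness of $\mathcal F$ is what gives $u\le v$ rather than $u\le v^*$. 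Your route differs from the paper's in structure: the paper argues directly, fixing $z_0$ and $a>u(z_0)$, and for each $v_0\in\mathcal F$ produces an open neighborhood $U_0$ of $v_0$ in $\mathcal{PSH}(\Omega)$ and a neighborhood $V_0$ of $z_0$ on which every $v\in U_0$ stays below $a-\varepsilon$ (using exactly the continuity of integral averages plus the sub-mean value property), then extracts a finite subcover of the compact family $\mathcal F$. You instead argue by contradiction with sequential compactness and invoke the Hartogs semicontinuity lemma as a black box. The underlying analytic mechanism --- pointwise values dominated by integral averages, which are continuous for the $L^1_{\text{loc}}$ topology --- is the same in both; the paper's covering argument is self-contained and uses only topological compactness, while yours is shorter at the cost of relying on metrizability of $\mathcal{PSH}(\Omega)$ (which the paper does establish) and on the cited lemma in its moving-point form.
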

\begin{proof}
To start off the proof, we recall some essential properties of integral averages.
For $f \in L_{\text{loc}}^1(\Omega)$,
    we let $M_r f(z)$ denote the integral average of $f$ over a euclidean ball with center $z$ and radius $r > 0$,
    where we assume that the euclidean distance from $z \in \Omega$ to the boundary of $\Omega$
        is strictly less than $r$.
We have that $L_{\text{loc}}^1(\Omega) \times \mathbb{C}^n \times \mathbb{R}_+^* \rightarrow \mathbb{R}$,
    $(f, z, r) \mapsto M_r f(z)$ is continuous. 

Let $u = \sup \mathcal{F}$, $z_0 \in \Omega$, and $a \in \mathbb{R}$ such that $u(z_0) < a$.
If we can show that there exists a neighborhood $V$ of $z_0$ such that $u|_V < a$ then $u$ is upper semicontinuous.
Let $\varepsilon > 0$ such that $u(z_0) < a - \varepsilon$ and $v_0 \in \mathcal{F}$,
    and take $r_0 > 0$ such that
\begin{equation*}
    v_0(z_0)
    \leq
    M_{r_0} v_0 (z_0)
    \leq
    a - \varepsilon.
\end{equation*}
By the continuity of $(f, z) \mapsto M_{r_0}f(z)$,
    there exists an open neighborhood $U_0$ of $v_0$ in $\mathcal{PSH}(\Omega)$
and an open neighborhood $V_0$ of $z_0$ such that
\begin{equation*}
    M_{r_0} v(z)
    <
    a - \varepsilon,
    \quad
    v \in U_0,\ z \in V_0.
\end{equation*}
The submean value property implies that $v(z) < a - \varepsilon$ for $v \in U_0$ and $z \in V_0$.
    Since $v_0$ was arbitrary and $\mathcal{F}$ is compact,
    there exists a finite covering $U_1, \dots, U_\ell$ of $\mathcal{F}$
    and open neighborhoods $V_1, \dots, V_\ell$ of $z_0$
    such that $v(z) < a - \varepsilon$, for $v \in U_j$ and $z \in V_j$.
If we set $V = \cap_j V_j$ then $v(z) < a - \varepsilon$, for all $v \in \mathcal{F}$ and $z \in V$.
So, $u|_V < a$ and $u$ is therefore upper semicontinuous.
\end{proof}

Let
    $0 \in S \subset \mathbb{R}^n_+$ be compact and convex,
    $1_n = (1, \dots, 1) \in \mathbb{N}^n$, and
    $\sigma_S = \varphi_S(1_n)$.
We will allow us a slight abuse of notation by identifying a vector in $\mathbb{C}^n$,
    denoted by a lower case letter,
    with a diagonal matrix,
    denoted by the corresponding upper case letter.
Thus, we identify $a \in \mathbb{C}^n$ with the diagonal matrix $A$ that has $a$ as its diagonal.
In this notation, the subadditivity of the supporting function of $S$ implies that
\begin{equation}
\label{eq:3.1}
\begin{aligned}
    H_S(Zw)
    &=
    H_S(Wz)
    =
    H_S(z_1 w_1, \dots, z_n w_n)
\\
    &\leq
    H_S(z) + H_S(w),
    \qquad
    z, w \in \mathbb{C}^n.
\end{aligned}
\end{equation}

\begin{proof}[Proof of Theorem \ref{thm:1.3}]
Recall that we are studying the operator defined by
\begin{equation*}
\begin{aligned}
    R^b_\delta u(z)
    &=
    \sup_{w \in \mathbb{C}^n} 
    \big \{
        u(Zw) - \delta^{-1} \log (\|w - 1_n\|_\infty + 1)
    \big \}
\\
    &=
    \log
    \sup_{w \in \mathbb{C}^n} 
        \frac{e^{u(Zw)}}{(\|w - 1_n\|_\infty + 1)^{1/\delta}},
    \qquad
    z \in \mathbb{C}^n.
\end{aligned}
\end{equation*}
With $u_{\delta, w}(z) = u(Zw) - \delta^{-1} \log (\|w - 1_n\|_\infty + 1)$, which is plurisubharmonic,
    we have that $R^b_\delta u = \sup_{w \in \mathbb{C}^n} u_{\delta, w}$.
The assumption that $u$ is locally bounded ensures that $u_{\delta, w} \in \mathcal{PSH}(\mathbb{C}^n)$.
For every $\delta > 0$, we have that $u_{\delta, 1_n} = u$, so $R^b_\delta u \geq u$.
Additionally,
\begin{equation*}
    H_S(w) \leq \sigma_S \log^+ \|w\|_\infty \leq \sigma_S \log (\|w - 1_n\|_\infty + 1),
    \quad
    w \in \mathbb{C}^n.
\end{equation*}
This, along with (\ref{eq:3.1}), implies that if $u \leq H_S + c_u$ then
\begin{equation*}
    R^b_\delta u
    \leq
    H_S + \sup_{w \in \mathbb{C}^n} \big\{(\sigma_S - \delta^{-1}) \log (\|w - 1_n\|_\infty + 1)\big\} + c_u.
\end{equation*}
Since $\delta < \sigma_S^{-1}$, we have $R^b_\delta u \leq H_S + c_u$.
We also have, for all $\gamma > 0$ and $z \in \mathbb{C}^n$,
\begin{equation*}
    R^b_\delta u(z)
    \leq
    \max
    \bigg \{
        \sup_{w \in 1_n + \gamma \overline{\mathbb{D}}^n} u(Zw),
        H_S(z) + c_u + (\sigma_S - \delta^{-1}) \log(\gamma + 1)
    \bigg \}.
\end{equation*}
With $z \in \mathbb{C}^n$ fixed and $\varepsilon > 0$, we may take $\gamma$ small enough that
    $u(Zw) \leq u(z) + \varepsilon$ for all $w \in 1_n + \gamma \overline{\mathbb{D}}^n$,
    since $u$ is upper semicontinuous.
Consequently, 
\begin{equation*}
    R^b_\delta u
    \leq
    \max
    \big \{
        u + \varepsilon,
        H_S + c_u + (\sigma_S - \delta^{-1}) \log(\gamma + 1)
    \big \}.
\end{equation*}
The second term converges to $-\infty$, as $\delta \searrow 0$.
Hence, since $R^b_\delta u \geq u$,
    we have that $R^b_\delta u \searrow u$, as $\delta \searrow 0$.

We now observe that $R^b_\delta u = \sup \mathcal{F}_{u, \delta}$, for
    $\mathcal{F}_{u, \delta} = \{\max \{u, u_{\delta, w}\} \,;\, w \in \mathbb{C}^n\}$, and
    $\mathcal{F}_{u, \delta} \subset \mathcal{PSH}(\mathbb{C}^n)$.
All functions in $\mathcal{F}_{u, \delta}$ are bounded above by $H_S + c_u$ and $u$ is a minimal element of
    $\mathcal{F}_{u, \delta}$,
    so $\mathcal{F}_{u, \delta}$ is relatively compact by the Montel property.
To show that $\mathcal{F}_{u, \delta}$ is closed in $\mathcal{PSH}(\mathbb{C}^n)$,
    and thus that $R^b_\delta u$ is plurisubharmonic, we note that
\begin{equation*}
    u_{\delta, w}
    \leq H_S + c_u + (\sigma_S - \delta^{-1})\log^+\|w\|_\infty,
    \quad
    w \in \mathbb{C}^n,
\end{equation*}
and recall as well that $u$ is assumed to be locally bounded below.
So, fixing $a \in \mathbb{C}^n$ and $U = B(z, 1)$ as the euclidean ball with center $z$ and radius $1$,
    and setting $M_2 < M_1$ as constants such that $M_2 \leq u(z) - c_u$ and $H_S(z) \leq M_1$, for $z \in U$,
    where $c_u > 0$ is such that $u \leq H_S + c_u$, we have
\begin{equation*}
    u_{\delta, w}(z) - u(z)
    \leq M_1 - M_2 + (\sigma_S - \delta^{-1})\log^+\|w\|_\infty,
    \quad
    z \in U,\ w \in \mathbb{C}^n.
\end{equation*}
So, if $w \in \mathbb{C}^n$ is such that $\log^+\|w\|_\infty \geq (M_1 - M_2)/(\delta^{-1} - \sigma_S) > 0$,
    then $u_{\delta, w}(z) \leq u(z)$, for $z \in U$.
Hence, since $u \leq R^b_\delta u$, we have
\begin{equation*}
    R^b_\delta u(z)
    =
    \sup_{w \in \bar{B}(z, r)} u_{\delta, w}(z),
    \quad
    z \in U,
\end{equation*}
where $r = \max\{1, e^{(M_1 - M_2)/(\delta^{-1} - \sigma_S)}\}$.
If we take a sequence $(w_j)_{j \in \mathbb{N}}$ in $\mathbb{C}^n$ such that $v_{\delta, w_j} \rightarrow v$ in
    $\mathcal{PSH}(\mathbb{C}^n)$,
    where $v_{\delta, w} = \max\{u, u_{\delta, w}\}$,
    we need to show that $v \in \mathcal{F}_{u, \delta}$.
If $(w_j)_{j \in \mathbb{N}}$ is unbounded then we can pick a subsequence
    $(w_{j_k})_{k \in \mathbb{N}} \subset \mathbb{C}^n \setminus \overline{B}(z, r)$.
Since $v_{\delta, w_{j_k}} = u$, we have that $v = u \in \mathcal{F}_{u, \delta}$.
If $(w_j)_{j \in \mathbb{N}}$ is bounded we can pick a convergent subsequence $(w_{j_k})_{k \in \mathbb{N}}$
    with limit $w$.
By the upper semicontinuity of $(z, w) \mapsto v_{\delta, w}$, we have that
    $\varlimsup_{k \rightarrow \infty} v_{\delta, w_{j_k}} = v_{\delta, w}$,
    and, by Hörmander \cite[Thm.~4.1.9]{Hormander:LPDO}, we have that
    $\varlimsup_{k \rightarrow \infty} v_{\delta, w_{j_k}} = v$ almost everywhere.
So, $v = v_{\delta, w}$ almost everywhere and, since they are plurisubharmonic,
    we have $v = v_{\delta, w} \in \mathcal{F}_{u, \delta}$.
Consequently, $\mathcal{F}_{u, \delta}$ is compact and, by Lemma \ref{lem:3.1}, $R^b_\delta u$ is plurisubharmonic.

All that remains to be proven is that $R^b_\delta u$ is lower semicontinuous.
By a change of variables, we have
\begin{equation*}
\begin{aligned}
    R^b_\delta u(z)
    &=
    \sup_{w \in \mathbb{C}^n}
    \{
        u(Zw) - \delta^{-1}\log(\|w - 1_n\|_\infty + 1)
    \}\\
    &=
    \sup_{w \in \mathbb{C}^n}
    \{
        u(w) - \delta^{-1}\log (\|Z^{-1}w - 1_n\|_\infty + 1)
    \},
    \quad
    z \in \mathbb{C}^n.
\end{aligned}
\end{equation*}
Setting $v_{\delta, w}(z) = u(w) - \delta^{-1} \log (\|Z^{-1}w - 1_n\|_\infty + 1)$,
    we note that $v_{\delta, w}$ is continuous on $\mathbb{C}^{*n}$ and
    $R^b_\delta u(z) = \sup_{w \in \mathbb{C}^n} v_{\delta, w}(z)$, for $z \in \mathbb{C}^{*n}$.
So, $R^b_\delta u$ is given as the supremum of continuous functions on the open set $\mathbb{C}^{*n}$,
    and is consequently lower semicontinuous on $\mathbb{C}^{*n}$.
\end{proof}

\section{Integral convolutions over diagonal matrices}
\label{sec:5}
Recall that the regularization from Theorem \ref{thm:1.4} is given by
\begin{equation*}
\begin{aligned}
    R^c_\delta u(z)
    &=
    \int_{\mathbb{C}^n} u(Az) \psi_\delta(A)\,d\lambda(A)
    =
    \int_{\mathbb{C}^n} u\big((I + \delta B)z\big) \psi(B)\,d\lambda(B)
\\
    &=
    \int_{\mathbb{C}^n} u\big((1 + \delta w_1) z_1, \dots, (1 + \delta w_n) z_n\big) \psi(w)\,d\lambda(w),
    \quad
    z \in \mathbb{C}^n, \nonumber
\end{aligned}
\end{equation*}
where $\psi \in \mathcal{C}_0^\infty(\mathbb{C}^n)$ is rotationally symmetric in each variable and
    $\int_{\mathbb{C}^n} \psi\,d\lambda = 1$,
and $\psi_\delta(z) = \delta^{-2n} \psi\big((z - 1_n)/\delta\big)$.
\begin{proof}[Proof of Theorem \ref{thm:1.4}]
By the Fubini-Tonelli theorem,
    $R^c_\delta \colon L_{\text{loc}}^1(\mathbb{C}^n) \rightarrow L_{\text{loc}}^1(\mathbb{C}^n)$
    and $R^c_\delta u \rightarrow u$, in the $L_{\text{loc}}^1$ topology, as $\delta \searrow 0$.
Also, $R^c_\delta \colon \mathcal{PSH}(\mathbb{C}^n) \rightarrow \mathcal{PSH}(\mathbb{C}^n)$.
By \cite[Prop.~3.2]{MagSigSigSno:2023}, we have $H_S(1_n + \delta w) \leq \delta \sigma_S$,
    for all $w \in \overline{\mathbb{D}}^n$,
    so $R^c_\delta \colon \mathcal{L}^S(\mathbb{C}^n) \rightarrow \mathcal{L}^S(\mathbb{C}^n)$,
    for all compact convex $S \subset \mathbb{R}_+^n$ containing $0$.
In fact, if $u \in \mathcal{L}^S(\mathbb{C}^n)$ and $c_u$ is a constant such that $u \leq H_S + c_u$ then,
    by (\ref{eq:3.1}),
\begin{equation*}
\begin{aligned}
    R^c_\delta u(z)
    &\leq
    \int_{\mathbb{C}^n} (H_S(z) + H_S(1_n + \delta w) + c_u)\psi(w)\,d\lambda(w)
\\
    &=
    H_S(z) + C\sigma_S \delta + c_u,
    \qquad
    z \in \mathbb{C}^n,
\end{aligned}
\end{equation*}
for $C = \sup\limits_{w \in \operatorname{supp} \psi} \|w\|_\infty$.
Since the map $A \mapsto Az$ has the Jacobi determinant $|z_1 \cdots z_n|^2$,
\begin{equation*}
    R^c_\delta u(z)
    =
    \int_{\mathbb{C}^n} u(w) \psi_\delta(Z^{-1}w)|z_1 \cdots z_n|^{-2}\,d\lambda(w),
    \quad
    z \in \mathbb{C}^{*n}.
\end{equation*}
By applying Lebesgue's theorem on dominated convergence, we may differentiate by $z_j$ under the
    integral sign arbitrarily often.
So, if $u \in L_{\text{loc}}^1(\mathbb{C}^n)$ then $R^c_\delta u \in \mathcal{C}^\infty(\mathbb{C}^{*n})$.

Note that, $R^c_\delta u(z) = U_z(\psi_\delta)$, where $U_z$ is the distribution associated to $u_z$,
    which is locally integrable since $u$ is locally bounded.
Let us fix $a \in \mathbb{C}^n$.
By assumption, $u_z \rightarrow u_a$ in $\mathcal{PSH}(\mathbb{C}^n)$, as $z \rightarrow a$, so
\begin{equation*}
    \lim_{z \rightarrow a}
        R^c_\delta u(z)
    =
    \lim_{z \rightarrow a}
        U_z(\psi_\delta)
    =
    U_a(\psi_\delta)
    =
    R^c_\delta u(a).
    \qedhere
\end{equation*}
\end{proof}

Recall that the operator in Theorem \ref{thm:1.5} is given by
\begin{equation*}
    R^d_\delta u(z)
    =
    \log R^c_\delta e^u (z)
    =
    \log \int_{\mathbb{C}^n} e^{u(Za)} \psi_\delta(a)\,d\lambda(a),
    \quad
    z \in \mathbb{C}^n.
\end{equation*}
We have already shown that $e^{R^d_\delta u} \searrow e^u$, as $\delta \searrow 0$.
We also have
\begin{equation*}
    R^d_\delta u
    \leq
    H_S + c_u + \log \int_{\mathbb{C}^n} e^{H_S(a)} \psi_\delta(A)\,d\lambda(A),
\end{equation*}
if $c_u$ is a constant such that $u \leq H_S + c_u$.
To show that $R^d_\delta u$ is plurisubharmonic, we recall that following result.
\begin{lemma}
\label{lemma:4.2}
Let $\Omega \subset \mathbb{C}$ and $u \colon \Omega \rightarrow [0, \infty[$.
Then $\log u \in \mathcal{SH}(\Omega)$ if and only if
    $z \mapsto u(z) e^{2 \operatorname{Re}(\tau z)} \in \mathcal{SH}(\Omega)$,
    for all $\tau \in \mathbb{C}$.
\end{lemma}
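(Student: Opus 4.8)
The plan is to prove both implications of Lemma~\ref{lemma:4.2} by reducing to the classical characterization of subharmonicity via the submean value inequality, exploiting the fact that $e^{2\operatorname{Re}(\tau z)} = |e^{\tau z}|^2$ is the modulus squared of a nonvanishing holomorphic function, hence $\log e^{2\operatorname{Re}(\tau z)} = 2\operatorname{Re}(\tau z)$ is harmonic. For the forward direction, suppose $\log u \in \mathcal{SH}(\Omega)$. Then $\log\big(u(z)e^{2\operatorname{Re}(\tau z)}\big) = \log u(z) + 2\operatorname{Re}(\tau z)$ is subharmonic, being the sum of a subharmonic and a harmonic (hence subharmonic) function. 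Since $t \mapsto e^t$ is convex and increasing, the composition $e^{\log u(z) + 2\operatorname{Re}(\tau z)} = u(z)e^{2\operatorname{Re}(\tau z)}$ is subharmonic as well. This direction is essentially immediate once one recalls that $\exp$ of a subharmonic function is subharmonic.

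For the converse, I would argue by contraposition or directly via the submean value property. Assume $z \mapsto u(z)e^{2\operatorname{Re}(\tau z)} \in \mathcal{SH}(\Omega)$ for every $\tau \in \mathbb{C}$. If $u$ vanishes somewhere, handle that case separately (there $\log u = -\infty$, consistent with subharmonicity as long as $\log u \not\equiv -\infty$ on a component; one checks the zero set is handled by upper semicontinuity of $\log u$, which follows from that of $u$). On the set where $u > 0$, fix a point $z_0 \in \Omega$ and a small disc $\overline{D}(z_0, r) \subset \Omega$. The goal is the inequality
\begin{equation*}
    \log u(z_0)
    \leq
    \frac{1}{2\pi} \int_0^{2\pi} \log u(z_0 + re^{i\theta})\, d\theta.
\end{equation*}
The key trick: for each $\tau$, the submean value inequality for $u(z)e^{2\operatorname{Re}(\tau z)}$ at $z_0$ gives
\begin{equation*}
    u(z_0) e^{2\operatorname{Re}(\tau z_0)}
    \leq
    \frac{1}{2\pi}\int_0^{2\pi} u(z_0 + re^{i\theta}) e^{2\operatorname{Re}(\tau(z_0 + re^{i\theta}))}\, d\theta,
\end{equation*}
which after dividing by $e^{2\operatorname{Re}(\tau z_0)}$ becomes
\begin{equation*}
    u(z_0)
    \leq
    \frac{1}{2\pi}\int_0^{2\pi} u(z_0 + re^{i\theta}) e^{2\operatorname{Re}(\tau r e^{i\theta})}\, d\theta.
\end{equation*}
Now optimize over $\tau$: writing $\tau = -\frac{1}{2r} h$ for a function, or more precisely choosing $\tau$ so that $2\operatorname{Re}(\tau r e^{i\theta})$ approximates $-\log u(z_0 + re^{i\theta})$ in the appropriate averaged sense. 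The clean way is to invoke that the infimum over $\tau$ of the right-hand side, combined with Jensen's inequality, yields the logarithmic submean inequality; alternatively one recognizes this as exactly the duality underlying the fact that $\log$ of a function is subharmonic iff all its "exponential tilts" are.

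The main obstacle I anticipate is making the optimization over $\tau$ rigorous: a single affine function $2\operatorname{Re}(\tau z)$ cannot match $-\log u$ on the whole circle, so one cannot simply "plug in the optimal $\tau$." The standard resolution is to approximate: given $\varepsilon > 0$, use the upper semicontinuity of $\log u$ and a compactness/partition argument to choose finitely many $\tau_1, \dots, \tau_N$ and a corresponding partition of the circle so that on each arc the tilt $2\operatorname{Re}(\tau_k z)$ is within $\varepsilon$ of $-\log u$, then sum the resulting inequalities. Letting $\varepsilon \to 0$ recovers the inequality. I expect the cleanest route is actually to quote that $\log u \in \mathcal{SH}$ iff $u$ is "logarithmically subharmonic," a notion equivalent to $u \cdot |f|^2$ being subharmonic for all holomorphic $f$ with $f = e^{\tau z}$ suffices by a density/exhaustion argument on simply connected subdomains — but since the excerpt only states the lemma and defers the proof, I would present the direct submean value argument with the approximation step spelled out as the technical heart. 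Finally, one should check the upper semicontinuity of $\log u$ and the "not identically $-\infty$" condition to conclude $\log u \in \mathcal{SH}(\Omega)$ in the proper sense.
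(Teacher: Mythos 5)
Your forward direction is fine and is exactly the paper's argument ($\log u + 2\operatorname{Re}(\tau\cdot)$ is subharmonic and $\exp$ of a subharmonic function is subharmonic). The converse, however, has a genuine gap at precisely the point you flag as ``the technical heart,'' and the proposed fix does not repair it. On a fixed circle $\partial D(z_0,r)$ the tilts $\theta\mapsto 2\operatorname{Re}(\tau re^{i\theta})$ form only the two-parameter family $A\cos(\theta+\phi)$ of first-order trigonometric polynomials, while the submean inequality for each $\tau$ is an inequality involving the integral over the \emph{whole} circle. Partitioning the circle into arcs and choosing $\tau_k$ to match $-\log u$ on the $k$-th arc does not help: the inequality for $\tau_k$ cannot be localized to that arc, and since each tilt has mean zero on the circle, a tilt that is very negative on one arc is necessarily large and positive elsewhere, so the uncontrolled part of the integral dominates; summing over $k$ yields nothing. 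Worse, the circle-by-circle implication you are after is simply false: take boundary data $v$ equal to $0$ on a small arc and $1$ elsewhere. Then $\inf_{\tau}\frac{1}{2\pi}\int v\,e^{2\operatorname{Re}(\tau re^{i\theta})}\,d\theta>0$ (for fixed $|\tau|$ the integral is at least $|E|e^{-2r|\tau|}$, and letting $|\tau|\to\infty$ makes it blow up since $\cos(\theta+\phi)$ is near $+1$ somewhere on the large arc), yet $\frac{1}{2\pi}\int\log v\,d\theta=-\infty$. So a positive value $u(z_0)$ can satisfy all the tilted submean inequalities on that circle while violating the logarithmic one. Any correct proof must use the hypothesis in a way that is not confined to a single circle. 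The same objection applies to your alternative ``density/exhaustion'' remark: passing from $f=e^{\tau z}$ to general holomorphic $f$ is not a density statement (polynomial exponents are not limits of linear ones) and is essentially equivalent to the lemma itself.

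The paper's proof of the converse instead first assumes $u$ smooth and strictly positive, expands $\Delta\bigl(u\,e^{2\operatorname{Re}(\tau\cdot)}\bigr)\ge 0$, and completes the square in $\tau$; choosing at each point the optimal value $\tau=-u^{-1}\partial u/\partial z$ (legitimate because the hypothesis holds for every $\tau$ at every point) yields the pointwise inequality $\Delta\log u\ge 0$. The general case is then reduced to this one by the regularization $v_\delta=(u+\delta)*\chi_\delta>0$, where a Fubini--Tonelli computation verifies that $v_\delta e^{2\operatorname{Re}(\tau\cdot)}$ still satisfies the submean inequality, and finally $\log v_\delta\searrow\log u$. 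If you want to salvage a submean-value flavour, the infinitesimal version of your idea (letting $r\to 0$ and extracting a distributional Laplacian inequality) leads you back to exactly this Hessian computation; you also still need the smoothing step to handle non-smooth $u$ and the zero set of $u$, which your sketch leaves essentially unaddressed.
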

\begin{proof}
To simplify notation, we denote by $\operatorname{Re}(\tau \cdot)$ the function $z \mapsto \operatorname{Re}(\tau z)$.
If $\log u$ is subharmonic then $e^{\log u + 2 \operatorname{Re}(\tau \cdot)}$ is as well,
    since $2 \operatorname{Re}(\tau \cdot)$ is subharmonic, for all $\tau \in \mathbb{C}$.

Now assume $u e^{2 \operatorname{Re}(\tau \cdot)}$ is subharmonic, for all $\tau \in \mathbb{C}$.
We further more assume $u$ is smooth and $u > 0$.
We have
\begin{equation*}
    \Delta \log u
    =
    4 \frac{\partial}{\partial z}
    \left (
        \frac{1}{u} \frac{\partial u}{\partial \bar z}
    \right )
    =
    \frac{\Delta u}{u} - \frac{4}{u^2} \frac{\partial u}{\partial z} \frac{\partial u}{\partial \bar z}
    =
    \frac{u \Delta u - |\nabla u|^2}{u^2}.
\end{equation*}
For all $\tau \in \mathbb{C}$, we also have
\begin{equation*}
\begin{aligned}
    0
    \leq
    \Delta (u e^{2 \operatorname{Re}(\tau \cdot)})
    &=
    4 \frac{\partial}{\partial z} \frac{\partial}{\partial \bar z} (u e^{2 \operatorname{Re}(\tau \cdot)})
    =
    4 e^{2 \operatorname{Re}(\tau \cdot)} \frac{\partial}{\partial z}
    \left ( 
        \frac{\partial u}{\partial \bar z}
        +
        \bar \tau u
    \right )
\\
    &=
    4u e^{2 \operatorname{Re}(\tau \cdot)}
    \left (
        \frac{\Delta u}{4u}
        +
        \frac{\tau}{u}
        \frac{\partial u}{\partial \bar z}
        +
        \frac{\bar \tau}{u}
        \frac{\partial u}{\partial z}
        +
        |\tau|^2
    \right )
\\
    &=
    4u e^{2 \operatorname{Re}(\tau \cdot)}
    \left (
        \frac{\Delta u}{4u}
        +
        \left |
            \tau + \frac{1}{u} \frac{\partial u}{\partial z}
        \right |^2
        -
        \frac{1}{u^2} \frac{\partial u}{\partial z} \frac{\partial u}{\partial \bar z}
    \right ).
\end{aligned}
\end{equation*}
As this holds for all $\tau \in \mathbb{C}$ and $u > 0$, we have
\begin{equation*}
    0
    \leq
    \frac{\Delta u}{4u}
    -
    \frac{1}{u^2} \frac{\partial u}{\partial z} \frac{\partial u}{\partial \bar z}
    =
    \Delta \log u,
\end{equation*}
so $\log u$ is subharmonic.
For a general $u \geq 0$, we set $v_\delta = (u + \delta) * \chi_\delta > 0$,
    where $\chi_\delta$ is a standard smoothing kernel.
Then, for $z \in \mathbb{C}$,
\begin{equation*}
    \int_{B(z, r)}
        v_\delta(w) e^{2 \operatorname{Re}(\tau w)}\,d\lambda(w)
    \geq
    \int_{B(z, r)}
        (u * \chi_\delta)(w) e^{2 \operatorname{Re}(\tau w)}\,d\lambda(w) + \delta e^{2 \operatorname{Re}(\tau z)}
\end{equation*}
and, by the Fubini-Tonelli theorem,
\begin{equation*}
    \int_{B(z, r)}
        (u * \chi_\delta)(w) e^{2 \operatorname{Re}(\tau w)}\,d\lambda(w)
    =
    \int_{\mathbb{C}}
        \chi_\delta(a)
    \int_{B(z, r)}
        e^{2 \operatorname{Re}(\tau w)}
        u(w - a)\,d\lambda(w)d\lambda(a)
\end{equation*}
\begin{equation*}
    \geq
    e^{2 \operatorname{Re}(\tau z)}
    \int_{\mathbb{C}}
        u(z - a)
        \chi_\delta(a)
        \,d\lambda(a)
    =
    u * \chi_\delta(z)
    e^{2 \operatorname{Re}(\tau z)}.
\end{equation*}
So, $v_\delta e^{2 \operatorname{Re}(\tau \cdot)}$ satisfies the submean inequality, and is therefore subharmonic.
We have already shown that this implies that $\log v_\delta$ is subharmonic,
    and since $\log v_\delta \searrow \log u$, as $\delta \searrow 0$, $\log u$ must also be subharmonic.
See Ransford \cite[Thm.~2.4.6]{Ransford:1995}.
\end{proof}

\begin{proof}[Proof of Theorem \ref{thm:1.5}]
What remains is to show that $R^d_\delta u$ is plurisubharmonic.
We fix $a, b \in \mathbb{C}^n$, $\tau \in \mathbb{C}$, and set, for $\zeta \in \mathbb{C}$,
\begin{equation*}
    v(\zeta)
    =
    R^c_\delta e^u(a + \zeta b) e^{2 \operatorname{Re}(\tau \zeta)}
    =
    \int_{\mathbb{C}^n}
        e^{u(W(a + \zeta b)) + 2 \operatorname{Re}(\tau \zeta)}\psi_\delta(W)\,d\lambda(W).
\end{equation*}
By \cite[Thm.~2.4.8]{Ransford:1995}, we have that $v \in \mathcal{SH}(\mathbb{C})$.
So, by Lemma \ref{lemma:4.2}, $\zeta \mapsto R^d_\delta u(a + \zeta b)$ is subharmonic and, consequently,
    $R^d_\delta u$ is plurisubharmonic.
\end{proof}

\section{Uniform continuity in Lelong classes}
\label{sec:6}
Classically, sufficient conditions on $K$ such that $V_K$ is Hölder continuous on $\mathbb{C}^n$ have been studied.
See, for example, Siciak \cite{Sic:1997b}.
Perera \cite{Per:2024}, continued this study for $V^S_{K, q}$.
Her main result, stated in the abstract, claims that if
    $S$ is a convex body, and
    $K$ and $q$ are sufficiently regular
    then $V^S_{K, q}$ is $\alpha$-Hölder continuous on $\mathbb{C}^n$.
The main aim of this section is to show that this result can not be true if $S$ is not a lower set,
    no matter what regularity we impose on $K$ and $q$.
We will do this by showing that if $S$ is not a lower set then $V^S_{K, q}$ is not uniformly continuous.
Let us begin by recalling the definition of some classes of regularity.

Let $U \subset \mathbb{C}^n$ be an open set and $f$ be a function on $U$.
We say that $f$ is \emph{uniformly continuous} if for $\varepsilon > 0$ there exists $\delta > 0$ such that
    $|f(x + y) - f(x)| < \varepsilon$, for all $x, y \in U$, such that $x + y \in U$ and $|y| < \delta$.
We say $f$ is \emph{$\alpha$-Hölder continuous}, for $0 < \alpha \leq 1$, if
\begin{equation*}
    |f(x) - f(y)|
    \leq
    C|x - y|^\alpha,
    \quad
    x, y \in U,
\end{equation*}
for some constant $C$.
We say that $f$ is \emph{Hölder continuous} if it is $\alpha$-Hölder continuous for some $0 < \alpha \leq 1$.
If $f$ is $\alpha$-Hölder continuous for $\alpha = 1$ we say that it is \emph{Lipschitz continuous} with
    \emph{Lipschitz constant} $C$.
Note that, Hölder continuous functions are uniformly continuous.
We can describe $H_S$ in these terms, depending on whether $S$ is a lower set or not.
\begin{theorem}
\label{thm:5.1}
Let $0 \in S \subset \mathbb{R}^n_+$ be compact and convex.
If $S$ is a lower set then $H_S$ is Lipschitz continuous with Lipschitz constant $\sigma_S = \varphi_S(1_n)$.
If $S$ is not a lower set then, for $\delta > 0$, there exists $w \in \delta \overline{\mathbb{D}}^n$
    such that the function
\begin{equation}
\label{eq:5.1}
    z \mapsto H_S(z + w) - H_S(z)
\end{equation}
    is not bounded above.
Hence, $H_S$ is not uniformly continuous.
\end{theorem}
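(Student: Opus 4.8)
The plan is to treat the two cases of the dichotomy separately. The lower-set case is almost immediate from material already quoted in the excerpt: when $S$ is a lower set, \cite[Thm.~5.8]{MagSigSigSno:2023} gives $H_S(z-w)\le H_S(z)+\sigma_S|w|$ for all $z,w\in\mathbb{C}^n$; taking $w=z-z'$ and then exchanging the roles of $z$ and $z'$ yields $|H_S(z)-H_S(z')|\le\sigma_S|z-z'|$, so $\sigma_S$ is a Lipschitz constant for $H_S$. I would spend one sentence on this.

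For the non-lower-set case the first step is to restate the hypothesis in terms of $\varphi_S$: if $S$ is not a lower set, there exists $\xi\in\mathbb{R}^n$ with $\varphi_S(\xi)<\varphi_S(\xi^+)$, where $\xi^+=(\max\{\xi_1,0\},\dots,\max\{\xi_n,0\})$. Indeed, failure of the lower-set property gives $x\in S$ and $y$ with $0\le y\le x$ coordinatewise but $y\notin S$; strictly separating $y$ from the compact convex set $S$ produces $\xi$ with $\langle y,\xi\rangle>\varphi_S(\xi)$, and then $\varphi_S(\xi^+)\ge\langle x,\xi^+\rangle\ge\langle y,\xi^+\rangle\ge\langle y,\xi\rangle>\varphi_S(\xi)$, using $0\le y\le x\in S$ and $\xi^+\ge\xi$, $\xi^+\ge0$. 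Set $\kappa=\varphi_S(\xi^+)-\varphi_S(\xi)>0$ and $J=\{j:\xi_j<0\}$; both $J$ and $J^c$ are nonempty (otherwise $\varphi_S(\xi)<\varphi_S(\xi^+)$ would fail, since $\varphi_S\ge0$ everywhere because $0\in S$), and $\xi^+$ coincides with $\xi$ on $J^c$ and vanishes on $J$.

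Next I would exhibit the perturbation and the unbounded family explicitly, staying inside $\mathbb{C}^{*n}$. Fix $\delta>0$, let $w_j=\delta$ for $j\in J$ and $w_j=0$ for $j\in J^c$ (so $w\in\delta\overline{\mathbb{D}}^n$), and for $t\ge0$ set $z^{(t)}=(e^{t\xi_1},\dots,e^{t\xi_n})\in\mathbb{C}^{*n}$. Then $z^{(t)}+w\in\mathbb{C}^{*n}$, $H_S(z^{(t)})=\varphi_S(t\xi)=t\varphi_S(\xi)$, and $H_S(z^{(t)}+w)=\varphi_S(\zeta^{(t)})$ with $\zeta^{(t)}_j=t\xi_j$ for $j\in J^c$ and $\zeta^{(t)}_j=\log(e^{t\xi_j}+\delta)$ for $j\in J$. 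The vector $t\xi^+-\zeta^{(t)}$ vanishes on $J^c$ and has $J$-entries $-\log(e^{t\xi_j}+\delta)\in[-\log(1+\delta),-\log\delta]$, hence is bounded uniformly in $t$; so by subadditivity, positive homogeneity, and local boundedness of $\varphi_S$, one gets $\varphi_S(\zeta^{(t)})\ge\varphi_S(t\xi^+)-\varphi_S(t\xi^+-\zeta^{(t)})\ge t\varphi_S(\xi^+)-C_\delta$ for a constant $C_\delta$ independent of $t$, and therefore $H_S(z^{(t)}+w)-H_S(z^{(t)})\ge t\kappa-C_\delta$, which tends to $+\infty$ as $t\to\infty$. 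Thus \eqref{eq:5.1} is unbounded above for this $w$. The final assertion is then formal: if $H_S$ were uniformly continuous, choosing $\delta$ for $\varepsilon=1$ and running the construction with radius $\delta/\sqrt n$ produces $w$ with $|w|<\delta$ for which $z\mapsto H_S(z+w)-H_S(z)$ is nonetheless bounded by $1$, a contradiction.

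I do not foresee a genuine obstacle here; the main idea is the translation of ``not a lower set'' into $\varphi_S(\xi)<\varphi_S(\xi^+)$, and the rest is a direct computation. The one delicate point is bookkeeping near the coordinate hyperplanes: one could instead place $z$ directly on $\{z_j=0:j\in J\}$ and invoke the description of $H_S$ there from \cite[Props.~3.3, 3.4]{MagSigSigSno:2023}, but it is cleaner to keep $z^{(t)}$ in $\mathbb{C}^{*n}$, let its $J$-coordinates tend to $0$, and absorb the resulting error into $C_\delta$ via $|\varphi_S(a)-\varphi_S(b)|\le\max\{\varphi_S(a-b),\varphi_S(b-a)\}$, as above.
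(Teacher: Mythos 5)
Your argument is correct in both cases, but the non-lower-set half takes a genuinely different route from the paper. The paper reduces to the coordinate hyperplane: it picks $s=(s',s'')\in S$ with $(s',0)\notin S$, invokes the structural result \cite[Prop.~3.3]{MagSigSigSno:2023} to write $H_S(z',0)=H_T(z')$ for a lower-dimensional compact convex $T$ with $s'\notin T$, and then compares with $L=\operatorname{ch}\{0,s'\}$, using that $H_L-H_T$ is unbounded above whenever $L\not\subset T$; the witness $w$ adds $\delta$ to the coordinates that were set to zero. You instead never leave $\mathbb{C}^{*n}$: you encode the failure of the lower-set property as a strict inequality $\varphi_S(\xi)<\varphi_S(\xi^+)$ via hyperplane separation of $y\notin S$ from $S$, evaluate along the curve $z^{(t)}=e^{t\xi}$, and absorb the perturbation into a $t$-independent constant by subadditivity of $\varphi_S$. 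The two constructions are morally parallel (in both, $w$ lifts exactly the coordinates that decay to zero), but the key lemmas differ: your version is more self-contained, needing only separation and the elementary properties of $\varphi_S$, and it sidesteps the boundary-value description of $H_S$ on $\mathbb{C}^n\setminus\mathbb{C}^{*n}$ entirely; the paper's version leans on the already-established hyperplane restriction result and makes the geometric obstruction (the segment $L$ sticking out of the trace $T$) more visible. Your lower-set half also differs cosmetically — you symmetrize the translation estimate $H_S(z-w)\le H_S(z)+\sigma_S|w|$ from \cite[Thm.~5.8]{MagSigSigSno:2023} rather than composing the Lipschitz maps $\operatorname{Log}^+$ and $\varphi_S$ as the paper does — but both rest on the same cited theorem and give the same constant.
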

\begin{proof}
First, we assume $S$ is a lower set.
Then, by \cite[Thm.~5.8]{MagSigSigSno:2023},
\begin{equation*}
    H_S(z)
    =
    \varphi_S(\log^+|z_1|, \dots, \log^+|z_n|),
    \quad
    z \in \mathbb{C}^n,
\end{equation*}
Note that, $\log^+ x$ is Lipschitz continuous with Lipschitz constant $1$ and
    $\varphi_S$ is Lipschitz continuous with Lipschitz constant $\sigma_S$.
To see this, note that
\begin{equation*}
    \varphi_S(\xi)
    =
    \varphi_S(\xi - \eta + \eta)
    \leq
    \varphi_S(\xi - \eta) + \varphi_S(\eta),
    \quad
    \xi, \eta \in \mathbb{R}^n,
\end{equation*}
so
\begin{equation*}
    \varphi_S(\xi) - \varphi_S(\eta)
    \leq
    \varphi_S(\xi - \eta)
    \leq
    \sigma_S \|\xi - \eta\|_\infty
    \leq
    \sigma_S |\xi - \eta|,
    \quad
    \xi, \eta \in \mathbb{R}^n.
\end{equation*}
With $\operatorname{Log}^+ z = (\log^+|z_1|, \dots, \log^+|z_n|)$, we have
\begin{equation*}
\begin{aligned}
    |H_S(z) - H_S(w)|
    &=
    |\varphi_S(\operatorname{Log}^+ z) - \varphi_S(\operatorname{Log}^+ w)|
\\
    &\leq
    \sigma_S |\operatorname{Log}^+ z - \operatorname{Log}^+ w|
    \leq
    \sigma_S |z - w|,
    \quad
    z, w \in \mathbb{C}^n.
\end{aligned}
\end{equation*}

Before continuing, recall that $\varphi_S \leq \varphi_T$ if and only if $S \subset T$,
    for convex $S, T \subset \mathbb{R}^n$.
So, $H_S \leq H_T$ if and only if $S \subset T$.
It will prove useful to us that if $S$ is not a subset of $T$ then $H_S - H_T$ is not bounded above.

Now assume $S$ is not a lower set.
Then, after possibly rearranging the variables,
    there exists $s = (s_1, \dots, s_n) = (s', s'') \in \mathbb{R}^\ell_+ \times \mathbb{R}^{n - \ell}_+$ such that
    $s \in S$ and $(s', 0) \not \in S$.
By \cite[Prop.~3.3]{MagSigSigSno:2023}, we have that $H_S(z', 0) = H_T(z')$, for $z' \in \mathbb{C}^\ell$,
    where $0 \in T \subset \mathbb{R}^\ell_+$ is compact convex and $s' \not \in T$.
So, with $L = \operatorname{ch}\{0, s'\} \subset \mathbb{R}^\ell_+$, we have that $H_L - H_T$ is not bounded above.
Note that, for $z' \in \mathbb{C}^{*\ell} \setminus \mathbb{D}^\ell$,
\begin{equation*}
\begin{aligned}
    H_S(z', \delta, \dots, \delta)
    &= \sup_{x \in S}
        \big (x_1 \log|z'_1| + \dots + x_\ell\log|z'_\ell| + (x_{\ell + 1} + \dots + x_n) \log \delta \big )
\\
    &\geq s_1 \log|z'_1| + \dots + s_\ell\log|z'_\ell| + (s_{\ell + 1} + \dots + s_n) \log \delta
\\
    &= H_L(z') + (s_{\ell + 1} + \dots + s_n) \log \delta.
\end{aligned}
\end{equation*}
\makebox[0pt][l]{So,}
\begin{equation*}
    H_S(z', \delta, \dots, \delta) - H_S(z', 0, \dots, 0)
    \geq
    H_L(z') - H_T(z') + C_\delta,
    \quad
    z' \in \mathbb{C}^\ell \setminus \mathbb{D}^\ell,
\end{equation*}
where $C_\delta = (s_{\ell + 1} + \dots + s_n) \log \delta$.
Consequently, \eqref{eq:5.1} is not bounded above for
    $w = (0, \dots, 0, \delta, \dots, \delta) \in \delta \overline{\mathbb{D}}^n$.
To be clear, the first $\ell$ elements of $w$ are $0$.
\end{proof}
We have,
    by \cite[Prop.~4.3]{MagSigSigSno:2023},
    that $V^S_{\mathbb{T}^n} = V^S_{\overline{\mathbb{D}}^n} = H_S$,
    where $\mathbb{T} = \{z \in \mathbb{C} \,;\, |z| = 1\}$.
However, the previous proposition does not provide a counterexample to the main result of Perera \cite{Per:2024},
    since $\mathbb{D}^n$ does not satisfy the boundary condition required by the Proposition and $\mathbb{T}^n$
    is not the closure of an open set.
A consequence of the previous result will, however, suffice.
\begin{corollary}
\label{cor:5.2}
Let $0 \in S \subset \mathbb{R}^n_+$ be compact and convex.
If $S$ is not a lower set then $\mathcal{L}^S_+(\mathbb{C}^n)$ contains no uniformly continuous functions.
\end{corollary}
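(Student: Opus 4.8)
The plan is to argue by contradiction: suppose $S$ is not a lower set but $\mathcal{L}^S_+(\mathbb{C}^n)$ contains a uniformly continuous function $u$. By definition of $\mathcal{L}^S_+(\mathbb{C}^n)$ there are constants $c_1, c_2$ with $H_S + c_1 \leq u \leq H_S + c_2$ on $\mathbb{C}^n$. The idea is that a uniform bound sandwiching $u$ between two translates of $H_S$ would force $H_S$ itself to be uniformly continuous, contradicting the second half of Theorem \ref{thm:5.1}. More precisely, for any $w \in \mathbb{C}^n$ and any $z \in \mathbb{C}^n$ we can write
\begin{equation*}
    H_S(z + w) - H_S(z)
    =
    \bigl(H_S(z + w) - u(z + w)\bigr) + \bigl(u(z + w) - u(z)\bigr) + \bigl(u(z) - H_S(z)\bigr).
\end{equation*}
The first bracket is at most $-c_1$, the third is at most $c_2$, and the middle one is controlled by the uniform continuity of $u$: given any $\varepsilon > 0$ there is $\delta > 0$ such that $|u(z+w) - u(z)| < \varepsilon$ whenever $|w| < \delta$, uniformly in $z$. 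Hence for $|w| < \delta$ the function $z \mapsto H_S(z + w) - H_S(z)$ is bounded above by $c_2 - c_1 + \varepsilon$, uniformly in $z$.

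But Theorem \ref{thm:5.1} says precisely that, since $S$ is not a lower set, for every $\delta > 0$ there exists $w \in \delta \overline{\mathbb{D}}^n$ (in particular with $|w| < \delta' $ for a suitable $\delta'$, after rescaling, or simply noting $\delta\overline{\mathbb{D}}^n \subset B(0,\delta\sqrt n)$) such that $z \mapsto H_S(z + w) - H_S(z)$ is \emph{not} bounded above. Choosing such a $w$ with $|w|$ small enough to fall under the uniform continuity threshold $\delta$ above yields a contradiction. A small bookkeeping point: Theorem \ref{thm:5.1} produces $w$ inside the polydisc $\delta\overline{\mathbb{D}}^n$, so one just applies it with a radius small enough that $\delta\overline{\mathbb{D}}^n$ sits inside the euclidean ball on which uniform continuity gives the $\varepsilon$-bound; this is immediate.

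The main (and essentially only) obstacle is simply invoking the two-sided bound defining $\mathcal{L}^S_+(\mathbb{C}^n)$ correctly and keeping the direction of the inequalities straight; there is no real analytic difficulty, since all the hard work — exhibiting a small translation under which $H_S - (H_S \circ \text{translate})$ is unbounded — is already packaged in Theorem \ref{thm:5.1}. I would close by remarking that, combined with $V^{S*}_{K,q} \in \mathcal{L}^S_+(\mathbb{C}^n)$ from \cite[Prop.~4.5]{MagSigSigSno:2023}, this shows $V^S_{K,q}$ cannot be Hölder (indeed cannot be uniformly) continuous when $S$ is not a lower set, regardless of the regularity of $K$ and $q$, contradicting the claim in \cite{Per:2024}.
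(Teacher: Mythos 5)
Your proposal is correct and is essentially the paper's own argument in contrapositive form: the paper bounds $u(z+w)-u(z) \geq H_S(z+w)-H_S(z)-2c_u$ directly and concludes unboundedness, while you assume uniform continuity of $u$ and derive a bound on $H_S(z+w)-H_S(z)$ contradicting Theorem \ref{thm:5.1}; the inequality used is the same and the reliance on Theorem \ref{thm:5.1} is identical. The polydisc-versus-ball bookkeeping you note is handled correctly and is indeed immaterial.
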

\begin{proof}
Let $\delta > 0$, $u \in \mathcal{L}^S_+(\mathbb{C}^n)$, and $c_u$ be a constant such that
\begin{equation*}
    H_S(z) - c_u
    \leq
    u(z)
    \leq
    H_S(z) + c_u,
    \quad
    z \in \mathbb{C}^n.
\end{equation*}
By Theorem \ref{thm:5.1}, there exists $w \in \delta \overline{\mathbb{D}}^n$ such that
    the function given by \eqref{eq:5.1} is not bounded above.
Since
\begin{equation*}
    u(z + w) - u(z)
    \geq
    H_S(z + w) - H_S(z) - 2c_u,
    \quad
    z, w \in \mathbb{C}^n,
\end{equation*}
the function $z \mapsto u(z + w) - u(z)$ is also not bounded above.
Hence, $u$ is not uniformly continuous.
\end{proof}
By \cite[Prop.~4.5]{MagSigSigSno:2023}, we have that $V^{S*}_{K, q} \in \mathcal{L}^S_+(\mathbb{C}^n)$,
    for all compact convex $S \subset \mathbb{R}^n_+$ containing $0$,
    compact $K \subset \mathbb{C}^n$, and
    admissible $q$ on $K$.
So, if $S$ is not a lower set then $V^S_{K, q}$ can not be uniformly continuous,
    and thus it can not be Hölder continuous.

There is an error in the proof of Lemma 2.4 in \cite{Per:2024}.
A major step in the proof involves finding a constant $C_w$, depending on $w$,
    such that $V^S_{\mathbb{C}^n, q}(z + w) \leq H_S(z) + C_w$,
    for $z \in \mathbb{C}^n$,
    where $q$ has been extended to $\mathbb{C}^n$ by standard methods of extending Hölder continuous functions.
This is in contradiction to \cite[Thm.~5.8]{MagSigSigSno:2023},
    where it is shown that $\mathcal{L}^S(\mathbb{C}^n)$ is translation invariant if and only if $S$ is a lower set.

The precise location of the error is the third step of the large inequality of Case 1,
    where it is incorrectly stated that
\begin{equation*}
    H_S(z_1, z_2)
    =
    \sup_{x \in S}
        x_2 \log |z_2|,
    \quad
    z_1 \in \mathbb{D},\ z_2 \in \mathbb{C} \setminus \mathbb{D}.
\end{equation*}
By \cite[Thm.~5.8]{MagSigSigSno:2023}, this can only hold when $S$ is a lower set.
We can also show that it fails by a direct counterexample.

\begin{example}
Let $S = \operatorname{ch}\{(0, 0), (1, 1), (1, 0)\}$.
Then $\sup_{x \in S} x_2 \log|z_2| = \log|z_2|$.
However, if $|z_2| > 1$, then
$
    H_S(z_2^{-1}, z_2)
    =
    \sup_{x \in S} (x_2 - x_1) \log |z_2| = 0,
$
since $x_2 - x_1 \leq 0$, for all $x \in S$.
\end{example}

It is worth noting that other results in \cite{Per:2024} need correcting,
    as they depend on wrong results from other papers.
Corollary~$1.3$ depends on \cite[Prop.~3.1]{BayHusLevPer:2020}, which is shown to be incorrect in
    Example~\ref{ex:3.1}, herein.
Proposition $2.3$ also depends on \cite[Prop.~3.1]{BayHusLevPer:2020},
    as well as depending on Levenberg and Perera \cite[Prop.~2.2]{LevPer:2020}.
The proof of Proposition 2.2 in \cite{LevPer:2020} involves constructing a strictly plurisubharmonic function in
    $\mathcal{L}^S_+(\mathbb{C}^n)$.
First, it is done under the assumption that $S$ is a convex polytope,
    that is $S = \operatorname{ch}\{v_1, \dots, v_\ell\} \subset \mathbb{R}^n_+$,
    where $v_1 = 0$.
The specific function chosen is
\begin{equation*}
    h_S(z)
    =
    \log
        \Big (
        1
        +
        \sum_{j = 2}^\ell |z|^{v_j}
        \Big ),
    \quad
    z \in \mathbb{C}^n.
\end{equation*}
In the case where $S$ is not a convex polytope but still contains a neighborhood of $0$,
    a decreasing sequence of convex polytopes $S_1, S_2, \dots$ is taken such that
    $\cap_{j = 1}^\infty S_j = S$.
They then claim that $h_{S_j}$ is a decreasing sequence and use its limit as a candidate function.
But the sequence is not decreasing.
This can be seen by considering the point $1_n = (1, \dots, 1)$.
In fact, $h_{S_j}(1_n) = \log(\# \operatorname{ext} S_j)$,
    where $\# A$ denotes the number of elements in the set $A$ and
    $\operatorname{ext} B$ denotes the set of extremal points of the convex set $B$.
So,
    if $S$ is not a convex polytope,
    then $h_{S_j}(1_n) \rightarrow +\infty$, as $j \rightarrow +\infty$.
An alternative to \cite[Prop.~2.2]{LevPer:2020} can be found in
    \cite[Lemma 2.1 and Prop.~2.2]{Sno:2024a},
    which apply when $S$ contains a neighborhood of $0$.

\bibliographystyle{siam}
\bibliography{rs_bibref}

\begin{thebibliography}{10}

\bibitem{BayHusLevPer:2020}
{\sc T.~Bayraktar, S.~Hussung, N.~Levenberg, and M.~Perera}, {\em
  Pluripotential theory and convex bodies: a {S}iciak-{Z}aharjuta theorem},
  Comput. Methods Funct. Theory, 20 (2020), pp.~571--590.

\bibitem{Fer:1973}
{\sc J.~P. Ferrier}, {\em Spectral theory and complex analysis}, vol.~No. 4 of
  North-Holland Mathematics Studies, North-Holland Publishing Co.,
  Amsterdam-London; American Elsevier Publishing Co., Inc., New York, 1973.
\newblock Notas de Matem\'{a}tica, No. 49. [Mathematical Notes].

\bibitem{Hal:1994}
{\sc S.~Halvarsson}, {\em Extension of entire functions with controlled
  growth}, Math. Scand., 74 (1994), pp.~73--97.

\bibitem{Hal:1996a}
\leavevmode\vrule height 2pt depth -1.6pt width 23pt, {\em A geometric
  interpretation of the relative order between entire functions}, Arch. Math.
  (Basel), 66 (1996), pp.~197--206.

\bibitem{Hal:1996b}
\leavevmode\vrule height 2pt depth -1.6pt width 23pt, {\em Growth properties of
  entire functions depending on a parameter}, Ann. Polon. Math., 64 (1996),
  pp.~71--96.

\bibitem{Hormander:LPDO}
{\sc L.~H{\"o}rmander}, {\em {The analysis of linear partial differential
  operators I and II}}, Springer Verlag, New York Berlin Heidelberg, 1983.

\bibitem{Hormander:convexity}
\leavevmode\vrule height 2pt depth -1.6pt width 23pt, {\em {Notions of
  convexity}}, vol.~127 of {Progress in Mathematics}, Birkh{\"a}user Boston
  Inc., Boston, MA, 1994.

\bibitem{Kis:1993}
{\sc C.~O. Kiselman}, {\em Order and type as measures of growth for convex or
  entire functions}, Proc. London Math. Soc. (3), 66 (1993), pp.~152--186.

\bibitem{Kli:1991}
{\sc M.~Klimek}, {\em Pluripotential theory}, vol.~6 of London Mathematical
  Society Monographs. New Series, The Clarendon Press Oxford University Press,
  New York, 1991.

\bibitem{LevPer:2020}
{\sc N.~Levenberg and M.~Perera}, {\em A global domination principle for
  {$P-$}pluripotential theory}, in Complex analysis and spectral theory,
  vol.~743 of Contemp. Math., Amer. Math. Soc., Providence, RI, 2020,
  pp.~11--19.

\bibitem{MagSigSig:2023}
{\sc B.~S. Magn{\'u}sson, {\'A}.~E. Sigurðard{\'o}ttir, and R.~Sigurðsson},
  {\em Polynomials with exponents in compact convex sets and associated
  weighted extremal functions: {T}he {S}iciak-{Z}akharyuta theorem}, Complex
  Anal. Synerg., 10 (2024), p.~12.

\bibitem{MagSigSigSno:2023}
{\sc B.~S. Magn{\'u}sson, {\'A}.~E. Sigurðard{\'o}ttir, R.~Sigurðsson, and
  B.~Snorrason}, {\em Polynomials with exponents in compact convex sets and
  associated weighted extremal functions - {F}undamental results}, Ann. Polon.
  Math., 133 (2024), pp.~37--70.

\bibitem{Per:2024}
{\sc M.~Perera}, {\em Further regularity results in {$P$}-extremal setting}, J.
  Anal., 32 (2024), pp.~3297--3305.

\bibitem{Ransford:1995}
{\sc T.~Ransford}, {\em Potential theory in the complex plane}, vol.~28 of
  London Mathematical Society Student Texts, Cambridge University Press,
  Cambridge, 1995.

\bibitem{Rockafellar:1970}
{\sc R.~T. Rockafellar}, {\em Convex analysis}, vol.~No. 28 of Princeton
  Mathematical Series, Princeton University Press, Princeton, NJ, 1970.

\bibitem{Sic:1981}
{\sc J.~Siciak}, {\em {Extremal plurisubharmonic functions in {${\bf
  C}\sp{n}$}}}, Ann. Polon. Math., 39 (1981), pp.~175--211.

\bibitem{Sic:1997b}
\leavevmode\vrule height 2pt depth -1.6pt width 23pt, {\em Wiener's type
  sufficient conditions in {$\bold C^N$}}, Univ. Iagel. Acta Math.,  (1997),
  pp.~47--74.

\bibitem{Sig:1991}
{\sc R.~Sigurdsson}, {\em Convolution equations in domains of {${\bf C}^n$}},
  Ark. Mat., 29 (1991), pp.~285--305.

\bibitem{Sno:2024a}
{\sc B.~Snorrason}, {\em Polynomials with exponents in compact convex sets and
  associated weighted extremal functions -- {G}eneralized product property},
  Math. Scand., 130 (2024), pp.~538--554.

\end{thebibliography}

\smallskip\noindent
Science Institute, University of Iceland,
IS-107 Reykjavík,  ICELAND. 

\smallskip\noindent
bergur@hi.is.

\end{document}